\long\def\@caption#1[#2]#3{
  \par
  \addcontentsline{\csname ext@#1\endcsname}{#1}%
    {\protect\numberline{\csname the#1\endcsname}{\ignorespaces #2}}%
  \begingroup
    \@parboxrestore
    \if@minipage
      \@setminipage
    \fi
   \normalsize
    \footnotesize
    \@makecaption{\csname fnum@#1\endcsname}{\ignorespaces #3}\par
  \endgroup}
 \theoremstyle{plain}
\newcommand{\ee}{\end{equation}}
\newtheorem{definition}{Definition}[section]
\newtheorem{theorem}[definition]{Theorem}
\newtheorem{lemma}[definition]{Lemma}
\newtheorem{remark}[definition]{Remark}
\newtheorem{proposition}[definition]{Proposition}
\newtheorem{corollary}[definition]{Corollary}
\newtheorem{pr1}[definition]{}
\newcommand\RR{{\Bbb R}}
\newcommand\CC{{\Bbb C}}
\newcommand\NN{{\Bbb N}}
\begin{document}

\title{ On  the  $\alpha$-Amenability of  Hypergroups  
}

\author{Ahmadreza Azimifard \\
\footnotesize\texttt{} }

\date{}
 \maketitle

\maketitle

\begin{abstract} 
Let $UC(K)$ denote  the Banach space of all  bounded uniformly
 continuous  functions on a hypergroup  $K$. The main results of this article  
 concern on the $\alpha$-amenability of $UC(K)$ and quotients and products of hypergroups. 
  It is also  shown that 
 a Sturm-Liouville  hypergroup with a positive index  is $\alpha$-amenable if and only if $\alpha=1$.

\vspace{.5cm}

\footnotesize{
\begin{tabular}{lrl}
{\bf  Keywords.}  &  \multicolumn{2}{l} {\em Hypergroups: Sturm-Liouville, {Ch\'{e}bli-Trim\`{e}che}, 
Bessel-Kingman.}\\
 &  \hspace{-.1cm}  {\em $\alpha$-Amenable  Hypergroups.}\\
 \end{tabular}
 \vspace{.3cm}

  {\bf  AMS  Subject Classification (2000):}{ primary 43A62,   43A07, 46H20,} {secondary   33C10.}
 
}
\end{abstract}

\section{Introduction}

In \cite{Ska92} M. Skantharajah systematically studied the amenability of hypergroups.
Among other things, he obtained various equivalent statements on   the amenability of hypergroups.
Let $K$ be  a locally compact hypergroup. Let 
$L^1(K)$  and $UC(K)$  denote the hypergroup algebra and the  Banach space of all  bounded uniformly
 continuous  functions on $K$, respectively. He showed that $K$ is amenable if and only if there exists an invariant mean on 
$UC(K)$. Observe that, contrary to the group case,  $UC(K)$ may fail to be an algebra in general. 
Commutative or compact hypergroups are amenable, and the amenability of $L^1(K)$ implies the amenability of $K$;
 however, the converse is not valid any longer  even if $K$ is commutative (see also \cite{thesis, f.l.s}).

Recently the notion of $\alpha$-amenable hypergroups was introduced and studied in \cite{f.l.s}.
Suppose  that $K$ is commutative, let $\alpha\in \widehat{K}$, and  denote by  $I(\alpha)$   the maximal ideal in $L^1(K)$
generated by $\alpha$. As shown in \cite{f.l.s}, $K$ is $\alpha$-amenable 
if and only if either $I(\alpha)$ has a bounded approximate identity or $K$ satisfies  
the modified Reiter condition of $P_1$-type in $\alpha$. 
The latter condition together 
with the recursion formulas for orthogonal polynomials yields a 
sufficient condition for the $\alpha$-amenability of  a polynomial hypergroup.
However, this   condition is not  available for well known  hypergroups on the non-negative real axis.

 The   purpose of this article is    to generalize  the notion of $\alpha$-amenability for  $K$ to   the Banach space  $UC(K)$. 
 It then  turns out  that the $\alpha$-amenability of $K$ is equivalent to the $\alpha$-amenability of $UC(K)$, and 
 a  $\alpha$-mean on $UC(K)$ is unique if and only if  $\alpha$ belongs to $L^1(K)\cap L^2(K)$. Furthermore, some results are  obtained
 on  the $\alpha$-amenability of quotients and products of  hypergroups. Given
a Sturm-Liouville hypergroup $K$ with a  positive index, it is also shown that  there exist  non-zero point 
derivations on $L^1(K)$. Therefore, $L^1(K)$ is not weakly amenable, $\{\alpha\}$ $(\alpha\not=1)$ is not a spectral set,
 and   $K$ is not $\alpha$-amenable if $\alpha\not=1$. However, an example (consisting of a certain Bessel-Kingman hypergroup) shows that 
 in general $K$ is not necessarily $\alpha$-amenable if $\{\alpha\}$ is a spectral set.

This article is organized as follows: Section 2 collects pertinent concepts concerning on hypergroups.  Section 3 considers the $\alpha$-amenability of $UC(K)$. Section 4 contains  the $\alpha$-amenability of quotients and  products of  hypergroups, and  Section 5 is considered on the question of  $\alpha$-amenability of Sturm-Liouville hypergroups.

\section{Preliminaries}

            Let $(K, \omega, \sim )$ be a locally compact hypergroup, where 
            $\omega:K\times K\rightarrow M^1(K)$ defined by $(x,y)\mapsto \omega(x,y)$,
            and $\sim:K\rightarrow K$ defined by $x\mapsto \tilde{x}$,  
            denote the convolution and involution on $K$, where  $M^1(K)$ stands 
            for all probability measures on $K$.
             $K$ is called commutative if  $\omega{(x,y)}=\omega{(y,x)}$, for 
             every $x, y\in K$.

    Throughout the article $K$ is a commutative hypergroup. 
    Let $C_c(K)$, $C_0(K)$, and $C^b(K)$ be the
    spaces of all  continuous functions, those which have   compact support, 
    vanishing at infinity, and bounded on $K$ respectively; both $C^b(K)$ and $C_0(K)$  
    will be  topologized by the uniform norm
      $\left\| \cdot  \right\|_\infty$.  The  space of complex regular  Radon
      measures on $K$ will be denoted by $M(K)$,   which  coincides with the dual space of $C_0(K)$
      \cite[Riesz's Theorem (20.45)]{stromberg}.
      The translation of $f\in C_c(K)$ at  the point $x\in K$, $T_xf$,
      is defined by $T_xf(y)=\int_K  f(t)d\omega{(x,y)}(t)$, for every $y\in K$.
      Being $K$ commutative  ensures the existence of a Haar measure on $K$ which is unique up to 
      a multiplicative constant \cite{Spec75}.
      Thus, according to the translation
      $T$, let $m$ be the
      Haar measure on $K$, and let $(L^1(K), \left\|\cdot\right\|_1)$ denote the usual Banach space of all integrable functions on $K$ \cite[6.2]{Jew75}.
            For   $f, g\in L^1(K)$   the convolution and
      involution  are  defined  by
      $ f*g(x):=\int_K f(y)T_{\tilde{y}}g(x)dm(y)$ ($m$-a.e. on $K$) 
      and
      $f^\ast(x)=\overline{f(\tilde
      x)}$,  respectively, that $(L^1(K),\left \|\cdot \right\|_1)$ becomes a commutative Banach $\ast$-algebra.
       If $K$
      is discrete, then $L^1(K)$ has an identity;
      otherwise $L^1(K)$ has a b.a.i. (bounded approximate identity), i.e.
      there exists a net $\{e_i\}_i$ of functions in $L^1(K)$ with 
      $\|e_i\|_1\leq M$, for some $M>0$,
      such that $\|f \ast e_i-f\|_1\rightarrow 0$ as
      $i\rightarrow \infty$ \cite{BloHey94}.
      The dual space $L^1(K)^\ast$ can be identified with the space  $L^\infty(K)$  of 
      essentially bounded  Borel measurable complex-valued functions on $K$. 
                       The  bounded multiplicative linear
                      functionals on $L^1(K)$ can be identified
                      with
                      $$
                                     \mathfrak{X}^b(K):=\left\{
                                     \alpha\in C^b(K): \alpha\not=0, \;\omega(x,y)
                                    (\alpha)=\alpha(x)\alpha(y), \; \forall\;x,y\in K\right\},$$
    where $\mathfrak{X}^b(K)$ is a locally compact Hausdorff space with  the   compact-open topology.
     $\mathfrak{X}^b(K)$ with its subset
                   $$ \widehat{K}:=\{\alpha\in \mathfrak{X}^b(K):
                    \alpha(\tilde x)=\overline{\alpha(x)}, \;  \forall x\in K\}$$
    are considered as the  character spaces  of $K$ .

    The Fourier-Stieltjes transform of
   $\mu\in M(K)$, $\widehat{\mu}\in C^b(\widehat{K})$,  is  $\widehat{\mu}(\alpha):
   =\int_K \overline{\alpha(x)}d\mu(x)$,   which by restriction on  $L^1(K)$ it  is called
   Fourier transform and $\widehat{f}\in C_0(\widehat{K})$, for every  $f\in L^1(K)$.
   
    There exists a unique  regular positive Borel measure $\pi$ on $\widehat{K}$ 
 with the support $\mathcal{S}$ such that 
 \begin{equation}\label{ch.1.23}\notag
                          \int_K  \left| f(x)\right|^2 dm(x)=
                          \int_{\mathcal{S}}\left| \widehat{f}(\alpha)\right|^2 d\pi(\alpha),
               \end{equation}
                       for all $f\in L^1(K)\cap L^2(K)$. $\pi$ is called  Plancherel measure  and 
                       its support,
                        $\mathcal{S}$, is   a  nonvoid closed subset of $\widehat{K}$. Observe that  
                        the constant function $1$  is in general  not contained in 
                        $\mathcal{S}$. We have  
  $\mathcal{S}\subseteq \widehat{K}\subseteq \mathfrak{X}^b(K)$, where proper
 inclusions are possible;   see \cite[9.5]{Jew75}.

\section{ $\alpha$-Amenability of $UC(K)$}

\begin{definition}\label{ch.2.14}
\emph{Let $K$ be a  commutative hypergroup
                           and
                           $\alpha \in \widehat{K}$. Let $X$ be a subspace of
                           $L^\infty(K)$ with
                           $\alpha\in X$ which  is closed under complex conjugation
                           and is translation invariant.
                            Then $X$ is called $\alpha$-amenable if
                             there
                           exists a $m_\alpha\in X^\ast$ with
                           the following properties:}
\begin{itemize}
              \item[\emph{(i)}]$m_\alpha(\alpha)=1$,
                  \item[\emph{(ii)}]$m_\alpha(T_x f)
                        =
                          {\alpha(x)} m_\alpha (f), $ \hspace{.2in}\emph{for
                          every }$f\in X$ \emph{and }$x \in
                          K$.
                          \end{itemize}
 \end{definition}
        The hypergroup     $K$ is called $\alpha$-amenable
             if $X=L^\infty(K)$ 
             is $\alpha$-amenable;
             in the case   $\alpha=1$,  $K$ respectively $L^\infty(K)$ 
             is called amenable.  As shown in  \cite{f.l.s}, 
              $K$ is $\alpha$-amenable
              if and only if either $I(\alpha)$ has
              a b.a.i. or  $K$ has
             the modified Reiter's condition of $P_1$ type in the character  $\alpha$.
             The latter is also equivalent to the
             $\alpha$-left amenability of $L^1(K)$, if $\alpha$ is real-valued  \cite{thesis}.
             For instance, commutative hypergroups are amenable, and compact hypergroups are $\alpha$-amenable 
             for  every character $\alpha$.


If $K$ is a locally compact group, then the amenability of $K$ is
equivalent to the amenability of diverse subalgebras of $L^\infty(K)$,
 e.g. $UC(K)$ the
 algebra of  bounded uniformly  continuous functions  on $K$ \cite{Pat88}.  The same is 
 true for hypergroups  although   $UC(K)$ fails to
 be an algebra in general  \cite{Ska92}.  We now  prove  this fact  in   terms
  of  $\alpha$-amenability.

Let $UC(K):=\{f\in C^b(K): x \mapsto T_xf \mbox{ is continuous from  } K
\text{ to } (C^b(K), \left\|\cdot \right\|_\infty)\}$.
    The function space  $UC(K)$ is a norm closed, conjugate closed,
 translation invariant subspace of
$C^b(K)$ containing the constants and the continuous functions vanishing at 
infinity \cite[Lemma 2.2]{Ska92}.
 Moreover,
 $\mathfrak{X}^b(K)\subset UC(K)$ and
$UC(K)=L^1(K)\ast L^\infty(K)$. Let $B$ be a subspace of $L^\infty(K)$ such that
$UC(K)\subseteq B$.      $K$ is amenable if and only if $B$ is amenable \cite[Theorem 3.2]{Ska92}.
%
%
%
%
 
The following theorem  provides a further  equivalent statement to the $\alpha$-amenability of $K$.

\begin{theorem}\label{main.1}
\emph{Let $\alpha\in \widehat{K}$. Then $UC(K)$ is $\alpha$-amenable if and only if $K$ is
$\alpha$-amenable.}
\end{theorem}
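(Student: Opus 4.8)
The plan is to prove both implications directly from the definitions, exploiting the identity $UC(K)=L^1(K)\ast L^\infty(K)$ together with the fact that $L^1(K)$ possesses a bounded approximate identity.

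\medskip

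\emph{The easy direction.} Suppose $K$ is $\alpha$-amenable, i.e.\ $L^\infty(K)$ is $\alpha$-amenable with $\alpha$-mean $m_\alpha\in L^\infty(K)^\ast$. Since $UC(K)\subseteq L^\infty(K)$ is a norm-closed, conjugate-closed, translation-invariant subspace containing $\alpha$ (because $\mathfrak{X}^b(K)\subset UC(K)$), the restriction $m_\alpha|_{UC(K)}$ is a bounded functional on $UC(K)$ satisfying (i) and (ii) verbatim. Hence $UC(K)$ is $\alpha$-amenable. Nothing more is needed here.

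\medskip

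\emph{The substantive direction.} Assume $UC(K)$ is $\alpha$-amenable with $\alpha$-mean $n_\alpha\in UC(K)^\ast$; I want to manufacture an $\alpha$-mean on all of $L^\infty(K)$. First I would fix a bounded approximate identity $\{e_i\}_i$ in $L^1(K)$ with $\|e_i\|_1\le M$. Given $f\in L^\infty(K)$, note that each $e_i\ast f$ lies in $UC(K)=L^1(K)\ast L^\infty(K)$, so the numbers $n_\alpha(e_i\ast f)$ make sense. Passing to a weak-$*$ limit along an ultrafilter (or a subnet), define $m_\alpha(f):=\lim_i n_\alpha(e_i\ast f)$; boundedness of $n_\alpha$ and of $\{e_i\}$ ensures $m_\alpha\in L^\infty(K)^\ast$ with $\|m_\alpha\|\le M\|n_\alpha\|$. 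The key computations are then: (a) to verify $m_\alpha(\alpha)=1$, for which one uses that $T_x\alpha=\alpha(x)\alpha$ gives $e_i\ast\alpha=\widehat{e_i}(\alpha)\,\alpha$ (after the usual care with involution in the hypergroup convolution), and $\widehat{e_i}(\alpha)\to1$ because $\{e_i\}$ is a b.a.i.; and (b) to verify the translation-covariance $m_\alpha(T_xf)=\alpha(x)m_\alpha(f)$, for which one pushes $T_x$ through the convolution, writes $e_i\ast T_xf$ in terms of $T_x(e_i\ast f)$ or of $(T_xe_i)\ast f$, applies property (ii) of $n_\alpha$ on $UC(K)$, and then takes the limit. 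A subtlety is that $L^1(K)$ is only commutative and convolution interacts with translation via $T_x(f\ast g)=f\ast T_xg$ and the involution identities recorded in Section 2, so I would first record the precise identity relating $e_i\ast T_xf$ to a translate of something in $UC(K)$, and only then apply $n_\alpha$.

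\medskip

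\emph{Main obstacle.} The delicate point is step (b): showing the limit functional $m_\alpha$ genuinely satisfies $m_\alpha(T_xf)=\alpha(x)m_\alpha(f)$ rather than merely an approximate version. This hinges on commuting translation with the approximate identity, i.e.\ controlling $\|e_i\ast T_xf-T_x(e_i\ast f)\|$ or using that $T_x$ is an isometry on $L^\infty(K)$ together with the module identity $T_x(e_i\ast f)=e_i\ast T_xf$, so that $n_\alpha(e_i\ast T_xf)=n_\alpha(T_x(e_i\ast f))=\alpha(x)\,n_\alpha(e_i\ast f)$ exactly, and the identity survives the limit. If that module identity holds on the nose — which it should, since $T_x$ acts on the $L^\infty$ factor of the convolution $L^1\ast L^\infty$ — then no approximation error accrues and the argument closes cleanly; the remaining verifications (boundedness, $m_\alpha(\alpha)=1$) are routine. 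I would therefore devote the bulk of the write-up to pinning down this commutation identity in the hypergroup setting and to the b.a.i.\ computation $\widehat{e_i}(\alpha)\to1$.
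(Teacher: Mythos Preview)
Your argument is correct, but the paper takes a simpler route for the extension step. Rather than convolving against a bounded approximate identity and passing to a weak-$*$ cluster point, the paper fixes once and for all a single $g\in L^1(K)$ with $\widehat{g}(\alpha)=1$ and defines $m_\alpha(\varphi):=m_\alpha^{uc}(\varphi\ast g)$ for $\varphi\in L^\infty(K)$. The verification of translation-covariance then reduces to the associativity identity $(T_x\varphi)\ast g=\delta_{\tilde{x}}\ast(\varphi\ast g)=T_x(\varphi\ast g)$, and the normalisation $m_\alpha(\alpha)=1$ is immediate from $\alpha\ast g=\widehat{g}(\alpha)\,\alpha=\alpha$. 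This bypasses entirely the ultrafilter, the subnet, and the limit computation $\widehat{e_i}(\alpha)\to 1$; it also yields the extension in closed form, so one sees directly (as the paper remarks) that the extended mean restricts back to $m_\alpha^{uc}$ on $UC(K)$. Your approach has the mild advantage of not needing to know in advance that some $g$ with $\widehat{g}(\alpha)=1$ exists, but since $\alpha$ is a nonzero multiplicative functional on $L^1(K)$ this is automatic, and the paper's construction is both shorter and more explicit. The module identity you flag as the ``main obstacle'' is not really an obstacle in either version: it holds exactly, not approximately, so no error term needs to be controlled.
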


\begin{proof}[ \emph{Proof:}]
Let $UC(K)$ be $\alpha$-amenable. There exists a $m_\alpha^{uc} \in UC(K)^\ast$ such
that $m_\alpha^{uc}(\alpha)=1$ and $m_\alpha^{uc}(T_xf)=\alpha(x)m_\alpha^{uc}(f)$ for
all  $f\in UC(K)$ and  $x\in K$. Let $g\in L^1(K)$ such that $\widehat{g}(\alpha)=1$.
 Define $m_\alpha:L^\infty(K)\longrightarrow \CC$ by
 \[m_\alpha(\varphi)=m_\alpha^{uc}(\varphi\ast g) \hspace{1.cm}(\varphi\in L^\infty(K))\]
 that $m_\alpha|_{UC(K)}=m_\alpha^{uc}$.
Since $\varphi\ast g\in UC(K)$,  $m_\alpha$ is a well-defined bounded linear
 functional on
$L^\infty(K)$, $m_\alpha(\alpha)=1$, and for  all $x\in K$ we have
\begin{align}
m_\alpha(T_x\varphi)&=m_\alpha^{uc}((T_x\varphi)\ast g)\\ \notag
&=m_\alpha^{uc}((\delta_{\tilde{x}}\ast \varphi)\ast g))\\ \notag
&=m_\alpha^{uc}(\delta_{\tilde{x}}\ast (\varphi\ast g))\\ \notag
&=m_\alpha^{uc}(T_x(\varphi\ast g))\\ \notag
&=\alpha(x)m_\alpha^{uc}(\varphi\ast g)\\ \notag
&=\alpha(x)m_\alpha(\varphi).
\end{align}
The latter shows that every $\alpha$-mean on $UC(K)$  extends  on $L^\infty(K)$.
 Plainly  the restriction of any  $\alpha$-mean of $K$   on   $UC(K)$ is  a $\alpha$-mean
 on $UC(K)$. Therefore, the statement is valid.
\end{proof}
%
%
\begin{corollary}
\emph{Let $\alpha\in \widehat{K}$ and $UC(K)\subseteq B\subseteq L^\infty(K)$. 
Then $K$ is $\alpha$-amenable if and only if $B$ is $\alpha$-amenable.}
\end{corollary}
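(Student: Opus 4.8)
The plan is to obtain both implications from Theorem \ref{main.1} together with the extension device already used in its proof. Throughout I assume, as is implicit in Definition \ref{ch.2.14}, that $B$ is closed under complex conjugation and translation invariant; since $UC(K)\subseteq B$ and $\mathfrak{X}^b(K)\subset UC(K)$, the character $\alpha$ automatically lies in $B$, so that ``$B$ is $\alpha$-amenable'' is a meaningful statement.

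First I would handle the direction ``$B$ $\alpha$-amenable $\Rightarrow$ $K$ $\alpha$-amenable''. Given an $\alpha$-mean $m_\alpha^{B}\in B^\ast$, consider its restriction to $UC(K)$. This is a bounded linear functional, and because $UC(K)$ is a translation invariant, conjugate closed subspace of $B$ containing $\alpha$, properties (i) and (ii) of Definition \ref{ch.2.14} are inherited by the restriction without change. Hence $UC(K)$ is $\alpha$-amenable, and Theorem \ref{main.1} yields that $K$ is $\alpha$-amenable.

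For the converse, suppose $K$ is $\alpha$-amenable. By Theorem \ref{main.1} there exists an $\alpha$-mean $m_\alpha^{uc}\in UC(K)^\ast$. Fix $g\in L^1(K)$ with $\widehat{g}(\alpha)=1$ and define $m_\alpha^{B}(\varphi):=m_\alpha^{uc}(\varphi\ast g)$ for $\varphi\in B$. Since $B\subseteq L^\infty(K)$ and $UC(K)=L^1(K)\ast L^\infty(K)$, we have $\varphi\ast g\in UC(K)$, so $m_\alpha^{B}$ is a well-defined bounded linear functional on $B$. The verification that $m_\alpha^{B}(\alpha)=1$ and that $m_\alpha^{B}(T_x\varphi)=\alpha(x)m_\alpha^{B}(\varphi)$ for all $x\in K$ is word for word the computation carried out in the proof of Theorem \ref{main.1}: one writes $T_x\varphi=\delta_{\tilde x}\ast\varphi$, uses associativity of convolution to push $\delta_{\tilde x}$ past $g$, rewrites $\delta_{\tilde x}\ast(\varphi\ast g)=T_x(\varphi\ast g)$, and finally invokes the translation covariance of $m_\alpha^{uc}$. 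Thus $B$ is $\alpha$-amenable.

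I do not expect a real obstacle here; the argument is essentially a packaging of the two halves of the proof of Theorem \ref{main.1}. The only point that requires a touch of care is the bookkeeping that $B$ is indeed an admissible subspace in the sense of Definition \ref{ch.2.14}, which is part of the standing hypotheses on the subspaces considered and is compatible with $UC(K)\subseteq B$.
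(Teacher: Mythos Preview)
Your argument is correct. The paper states this as an immediate corollary with no proof given; the intended reading is simply restriction in both cases: restrict an $\alpha$-mean on $L^\infty(K)$ to $B$ for the forward direction, and restrict an $\alpha$-mean on $B$ to $UC(K)$ (then invoke Theorem~\ref{main.1}) for the converse. Your handling of ``$B$ $\alpha$-amenable $\Rightarrow$ $K$ $\alpha$-amenable'' matches this exactly. For ``$K$ $\alpha$-amenable $\Rightarrow$ $B$ $\alpha$-amenable'' you take a slightly longer route, passing down to $UC(K)$ via Theorem~\ref{main.1} and then back up to $B$ using the convolution extension $\varphi\mapsto m_\alpha^{uc}(\varphi\ast g)$; this works, but since $B\subseteq L^\infty(K)$ you could simply restrict the $\alpha$-mean on $L^\infty(K)$ directly to $B$ and be done in one line. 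Your remark that $B$ must be assumed translation invariant and conjugate closed (so that Definition~\ref{ch.2.14} applies) is well taken and is indeed an implicit hypothesis the paper leaves unstated.
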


The Banach space   $L^\infty(K)^{\ast}$ with the Arens product defined as follows is a Banach algebra:
\[\langle m\cdot m',  f\rangle=\langle m, m'\cdot f\rangle,  \mbox{ in which }
\langle m'\cdot f,  g \rangle=\langle m',  f\cdot g \rangle,\]
 and $\langle f\cdot g,  h\rangle=\langle f,  g \ast h\rangle$
 for all $m, m'\in L^\infty(K)^\ast$, $f\in L^\infty(K)$ and $g,h\in L^1(K)$  
 where $\langle f,g\rangle:=f(g)$.

The Banach space  $UC(K)^\ast$ with the restriction of the Arens product 
is a Banach algebra,  and  it can be identified with a
  closed right ideal of the Banach algebra
$L^1(K)^{\ast\ast}$ \cite{ulger}.

 If $m, m'\in UC(K)^\ast$, $f\in UC(K)$, and $x\in K$,  then $m'\cdot f\in UC(K)$ and we may have 
 \[\langle m\cdot m', f\rangle=\langle m,m'\cdot f\rangle, \hspace{.1cm}
                               \langle m'\cdot f, x\rangle =\langle  m',T_xf\rangle.\]
        If  $y\in K$,  then  $\int_K T_tf
                               d\omega(x,y)(t)=T_y(T_xf)$ which implies that 
                               \begin{equation}\notag
                               T_x(m\cdot f)=m\cdot T_xf,
                               \end{equation}
                                as
                               \begin{align}\notag
                               T_x(m\cdot f)(y)&=\int_K
                               m\cdot f(t)d\omega(x,y)(t)\\ \notag
                               &=\int_K \langle m, T_tf\rangle d\omega(x,y)(t)
                               \\ \notag
                               &=\langle m, \int_K T_tf
                               d\omega(x,y)(t)\rangle.  \notag
                               \end{align}


 Let $  X = UC(K)$,  $f\in X$  and  $g\in C_c(K)$ $(g\geq 0)$    
 with  $\|g\|_1=1$. Since the mapping $x\rightarrow T_xf$ is continuous 
 from $K$ to $(C^b(K), \left\|\cdot \right\|_\infty)$
 and the point evaluation functionals in $X^\ast$ separate points of $X$, we have
 \[g\ast f=\int_K g(x) T_{\tilde{x}}fdm(x).\]
 Therefore,  for every  1-mean $m$ on $X$  we have  
 $m(f)=m(g\ast f)$. 
 Hence,  two  1-means $m$ and $m'$  on 
 $L^\infty(K)$ are equal if  they are equal 
 on $UC(K)$, as
 \[m(f)=m(g\ast f)=m'(g\ast f)=m'(f) \hspace{1.cm}(f\in L^\infty(K))\]
 and $g$ is assumed as above. The latter  
 together with \cite[Theorem 3.2]{Ska92} show 
 a bijection  between means on 
 $UC(K)$ and $L^\infty(K)$. So, if   $UC(K)$ is amenable 
 with  a unique mean, then its extension on
 $L^\infty(K)$ is also unique which implies  that $K$ is compact  \cite{Kamyabi.2},  therefore 
 the identity character is isolated in $\mathcal{S}$,  the support of the 
 Plancherel measure.
 We   have  the following theorem in general:

\begin{theorem}\label{main.2}
\emph{If $UC(K)$ is $\alpha$-amenable with the  unique $\alpha$-mean
$m^{uc}_\alpha$,  then $m^{uc}_\alpha\in L^1(K)\cap L^2(K)$,
$\{\alpha\}$ is isolated in $\mathcal{S}$
and $m^{uc}_\alpha=\frac{\pi(\alpha)}{\|\alpha\|_2^2}$, where
$\pi:L^1(K)\rightarrow L^1(K)^{\ast\ast}$ is the canonical
embedding. If $\alpha$ is positive, then $K$ is compact.
}

\end{theorem}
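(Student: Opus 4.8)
The plan is to push the uniqueness hypothesis through an averaging argument and then to realize $m^{uc}_\alpha$ as an honest $L^1$-function by restricting to $C_0(K)$ and invoking the Plancherel theorem. First I would show that uniqueness forces
\[
m^{uc}_\alpha(g*f)=\widehat g(\alpha)\,m^{uc}_\alpha(f)\qquad\bigl(f\in UC(K),\ g\in L^1(K)\bigr).
\]
Indeed, for $g\in L^1(K)$ with $\widehat g(\alpha)=1$ the functional $f\mapsto m^{uc}_\alpha(g*f)$ again lies in $UC(K)^\ast$, and since $g*\alpha=\widehat g(\alpha)\alpha=\alpha$ and $g*(T_xf)=T_x(g*f)$ (by commutativity of $K$ and $T_xf=\delta_{\tilde x}*f$) it is an $\alpha$-mean; by uniqueness it equals $m^{uc}_\alpha$, and the displayed identity for a general $g$ follows by linearity after replacing $g$ by $g+(1-\widehat g(\alpha))g_0$ for a fixed $g_0$ with $\widehat{g_0}(\alpha)=1$. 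As in the $\alpha=1$ discussion preceding the statement and in Theorem~\ref{main.1}, this identity also produces a bijection between the $\alpha$-means of $UC(K)$ and those of $L^\infty(K)$, so the latter is unique as well.

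Next I would restrict $m^{uc}_\alpha$ to $C_0(K)\subseteq UC(K)$ and represent it by a measure $\mu\in M(K)$. The identity above gives $\delta_x*\mu=\alpha(x)\mu$ for every $x\in K$; passing to Fourier--Stieltjes transforms and using that the transforms $\widehat g$ $(g\in L^1(K))$ separate the points of $\widehat K$, we obtain that $\widehat\mu$ is supported at the single point $\alpha$, which by continuity of $\widehat\mu$ is isolated in $\widehat K$. Now fix a bounded approximate identity $(e_i)\subseteq C_c(K)$ for $L^1(K)$: then $e_i*\mu\in L^1(K)$ is continuous and, by the eigen-relation, satisfies $T_x(e_i*\mu)=\alpha(x)(e_i*\mu)$; since the only continuous solutions of $T_xh=\alpha(x)h$ are the scalar multiples of $\alpha$, $e_i*\mu=c_i\alpha$, and since $e_i*\mu\to\mu\neq0$ weak$^\ast$ some $c_i$ is nonzero, whence $\alpha\in L^1(K)$, and (being bounded) $\alpha\in L^1(K)\cap L^2(K)$. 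Combining $T_x\alpha=\alpha(x)\alpha$ with the Plancherel formula then forces $\alpha\in\mathcal S$ and $\pi(\{\alpha\})=\|\alpha\|_2^{-2}>0$, so, together with the isolation just noted, $\{\alpha\}$ is isolated in $\mathcal S$. Finally $\mu$ is the scalar multiple $\|\alpha\|_2^{-2}\,\alpha\,dm$ of $\alpha\,dm$ (the scalar fixed by $m^{uc}_\alpha(\alpha)=1$), and one more use of the averaging identity propagates $m^{uc}_\alpha=\pi(\alpha)/\|\alpha\|_2^2$ from $C_0(K)$ to all of $UC(K)$.

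Once $\alpha\in L^1(K)$ is available, the last assertion is immediate: translation-invariance of the Haar measure together with $T_x\alpha=\alpha(x)\alpha$ gives $\alpha(x)\int_K\alpha\,dm=\int_K\alpha\,dm$ for every $x$, so if $\alpha$ is positive then $\int_K\alpha\,dm=\|\alpha\|_1>0$ forces $\alpha\equiv1$; hence $1\in L^1(K)$, $m(K)<\infty$, and $K$ is compact. I expect the one genuinely delicate point to be the non-vanishing of the representing measure $\mu=m^{uc}_\alpha|_{C_0(K)}$, on which the whole second step rests: this is exactly where uniqueness --- rather than mere $\alpha$-amenability --- is needed, for if $m^{uc}_\alpha$ vanished on $C_0(K)$ one could, along the lines of the $\alpha=1$ result \cite{Kamyabi.2}, exhibit a second $\alpha$-mean on $UC(K)$, a contradiction. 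Granting this, everything else is routine bookkeeping with the Plancherel formula and the averaging identity.
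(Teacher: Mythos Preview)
Your averaging identity $m^{uc}_\alpha(g*f)=\widehat g(\alpha)\,m^{uc}_\alpha(f)$ is the right opening move, and it is essentially what the paper computes---but the paper phrases it in terms of the Arens product on $UC(K)^\ast$ and pushes it one step further. By uniqueness one has $m^{uc}_\alpha\cdot n=\langle n,\alpha\rangle\, m^{uc}_\alpha$ for \emph{every} $n\in UC(K)^\ast$ with $\langle n,\alpha\rangle\neq0$, not just for $n$ arising from $L^1(K)$; from this one reads off directly that $n\mapsto m^{uc}_\alpha\cdot n$ is $w^\ast$--$w^\ast$ continuous on $UC(K)^\ast$. Hence $m^{uc}_\alpha$ lies in the topological centre of $UC(K)^\ast$, which by \cite[Theorem~3.4.3]{Kamyabi.2} equals $M(K)$. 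Thus $m^{uc}_\alpha$ \emph{is} a measure on $K$, not merely its restriction to $C_0(K)$, and non-vanishing is automatic since $m^{uc}_\alpha(\alpha)=1$. From $\widehat{m^{uc}_\alpha}=\delta_\alpha\in C^b(\widehat K)$ one gets that $\{\alpha\}$ is clopen, and the ideal property of $L^1(K)\lhd M(K)$ together with injectivity of the Fourier transform forces $m^{uc}_\alpha\in L^1(K)$ (convolve with any $g\in L^1(K)$ with $\widehat g(\alpha)=1$); the rest is as you describe.

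This is exactly where your proposal has a real gap. You restrict $m^{uc}_\alpha$ to $C_0(K)$ to obtain $\mu\in M(K)$, but then everything hinges on $\mu\neq0$, and your suggestion that a vanishing $\mu$ would let one manufacture a second $\alpha$-mean ``along the lines of the $\alpha=1$ result \cite{Kamyabi.2}'' is not substantiated: the non-uniqueness results for invariant means on non-compact hypergroups rest on growth and asymptotic phenomena with no evident analogue for a general character $\alpha$. Since $\alpha$ need not lie in $C_0(K)$, the normalisation $m^{uc}_\alpha(\alpha)=1$ gives no direct information about $\mu$, and your averaging identity only shows $m^{uc}_\alpha$ vanishes on $L^1(K)*C_0(K)=C_0(K)$ again---it does not propagate the vanishing to all of $UC(K)=L^1(K)*L^\infty(K)$. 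The paper's topological-centre route sidesteps this issue entirely and is the missing ingredient in your argument.
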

\begin{proof}[\emph{Proof:}]
Let $m_\alpha^{uc}$ be the  unique   $\alpha$-mean on
 $UC(K)$, $n\in UC(K)^\ast$
and $\{n_i\}_i$ be a net  converging to $n$ in the
$w^\ast$-topology. Let  $x\in K$ and $f\in UC(K)$. Then
\begin{align}
\langle m_\alpha^{uc}\cdot n_i, T_xf\rangle
 &=\langle m_\alpha^{uc}, n_i\cdot(T_xf)\rangle\\ \notag
 &=\langle m_\alpha^{uc},T_x(n_i\cdot f)\rangle\\ \notag
 &=\alpha(x)\langle m_\alpha^{uc}, n_i\cdot f\rangle\\ \notag
 &=\alpha(x)\langle m_\alpha^{uc}\cdot n_i,
f\rangle. \notag
\end{align}
 For $\lambda_i=\langle n_i, \alpha\rangle\not=0$, since the  associated 
  functional to the character $\alpha$
on $UC(K)^{\ast}$ is multiplicative \cite{young.1},
$m_\alpha^{uc}\cdot n_i/\lambda_i$ is  a  $\alpha$-mean on $UC(K)$ which
is equal to  $m_\alpha^{uc}$. Then the mapping $n\rightarrow
m_\alpha^{uc}\cdot n$ defined on $UC(K)^\ast$ is  $w^\ast$-$w^\ast$
continuous, hence    $m_\alpha^{uc}$ is in the topological centre of
$UC(K)^\ast$, i.e. $M(K)$; see \cite[Theorem 3.4.3]{Kamyabi.2}.
  Since
   $\widehat{m^{uc}_\alpha}(\beta)=\delta_\alpha(\beta)$
   and $\widehat{m^{uc}_\alpha}\in C^b(\widehat{K})$, $\{\alpha\}$
   is an open-closed subset of $\widehat{K}$.  The algebra    $L^1(K)$
   is a
    two-sided closed ideal in $M(K)$ and the Fourier transform is injective, so
        $m^{uc}_\alpha$ and $\alpha$ belong to $L^1(K)\cap L^2(K)$.
    The  inversion
    theorem, \cite[Theorem 2.2.36]{BloHey94}, indicates
    $\alpha=\widehat{\alpha}^{\check{\;}}$, accordingly
     $\alpha\in \mathcal{S}$.
  Let  $m_\alpha:=\pi(\alpha)/\|\alpha\|_2^2$.   Obviously  $m_\alpha$ is a  $\alpha$-mean
  on $L^\infty(K)$,  and the  restriction
  of $m_\alpha$ on  $UC(K)$ yields  the desired unique
  $\alpha$-mean. 
  
  If $\alpha$ is positive, then
  $$\alpha(x)\int_K\alpha(t)dm(t)=\int_K
  T_x\alpha(t)dm(t)=\int_K\alpha(t)dm(t)$$
 which  implies that  $\alpha=1$, hence $K$ is compact.
 \end{proof}

Observe that in contrast to the case of  locally compact groups,  there exist noncompact hypergroups with  
unique $\alpha$-means. For example, for  little  q-Legendre polynomial hypergroups,  we have 
 $\widehat{K}\setminus{\{1\}}\subset L^1(K)\cap L^2(K)$; see  \cite{Frankcompact}. Therefore, 
by Theorem \ref{main.2},  $UC(K)$ and  $K$ are    $\alpha$-amenable with the  unique $\alpha$-mean $m_\alpha$, 
whereas $K$ has infinitely many 1-means \cite{Ska92}.


Let $\Sigma_\alpha(X)$ be the set of all $\alpha$-means on $X= L^\infty(K)$ or $UC(K)$. 
If $\alpha=1$,  then  $\Sigma_1(X)$ is  nonempty (as $K$ is commutative)  
$\mbox{weak}^{\ast}$-compact convex set in $X^\ast$  \cite{Ska92}.
If  $\alpha\not=1$ and $X$ is $\alpha$-amenable, then the same is true for  
$\Sigma_\alpha(X)$. 
 

\begin{theorem}
\emph{Let $X$ be  $\alpha$-amenable ($\alpha\not=1$). Then
$\Sigma_\alpha(X)$  is a
nonempty $\mbox{weak}^{\ast}$-compact convex subset of $X^\ast.$
Moreover,  $ \Sigma_\alpha(X)\cdot M / \langle M, \alpha\rangle\subseteq \Sigma_\alpha(X)$,  
for all  $M \in X^{\ast}$  with
 $\langle M,\alpha\rangle \not=0$. Furthermore, if $m_\alpha \in \Sigma_\alpha(X)$, 
 then  $m_\alpha^n=m_\alpha$  for all $n\in\NN$.}
\end{theorem}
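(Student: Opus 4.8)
The plan is to verify the three assertions in order, drawing on the structural facts about $UC(K)^\ast$ (respectively $L^\infty(K)^\ast$) as a Banach algebra under the Arens product that were recorded just before the statement. Throughout, write $X$ for either $L^\infty(K)$ or $UC(K)$ and recall that the linear functional associated with the character $\alpha$ is multiplicative on $X^\ast$ (see \cite{young.1}); denote it by $\widehat{\,\cdot\,}(\alpha)$, so $\langle M,\alpha\rangle$ is exactly the value of this multiplicative functional at $M$.

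First I would treat the module-type identity. Fix $m_\alpha\in\Sigma_\alpha(X)$ and $M\in X^\ast$ with $\lambda:=\langle M,\alpha\rangle\neq 0$. The computation is the one already carried out in the proof of Theorem \ref{main.2}: for $x\in K$ and $f\in X$ one has $T_x(M\cdot f)=M\cdot T_xf$, whence
\begin{align}\notag
\langle m_\alpha\cdot M, T_xf\rangle
&=\langle m_\alpha, M\cdot(T_xf)\rangle
=\langle m_\alpha, T_x(M\cdot f)\rangle\\ \notag
&=\alpha(x)\langle m_\alpha, M\cdot f\rangle
=\alpha(x)\langle m_\alpha\cdot M, f\rangle.
\end{align}
Dividing by $\lambda$ gives property (ii) of Definition \ref{ch.2.14} for $m_\alpha\cdot M/\lambda$. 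For property (i), multiplicativity of $\widehat{\,\cdot\,}(\alpha)$ yields $\langle m_\alpha\cdot M,\alpha\rangle=\langle m_\alpha,\alpha\rangle\langle M,\alpha\rangle=\lambda$, so $\langle m_\alpha\cdot M/\lambda,\alpha\rangle=1$. Hence $m_\alpha\cdot M/\lambda\in\Sigma_\alpha(X)$, which is the claimed inclusion.

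Next, convexity of $\Sigma_\alpha(X)$ is immediate since conditions (i) and (ii) are affine constraints on $m_\alpha$, and nonemptiness is the hypothesis that $X$ is $\alpha$-amenable. For weak$^\ast$-compactness, the set is weak$^\ast$-closed (again because (i) and (ii) are defined by continuity of evaluation at the fixed elements $\alpha$ and $T_xf-\alpha(x)f$), so it suffices to bound it in norm. Here I would use the multiplicativity of $\widehat{\,\cdot\,}(\alpha)$ together with the inclusion just proved: taking $M=m_\alpha$ itself, $m_\alpha^2=m_\alpha\cdot m_\alpha\in\Sigma_\alpha(X)$ since $\langle m_\alpha,\alpha\rangle=1$; iterating, $m_\alpha^n\in\Sigma_\alpha(X)$ for every $n\in\NN$. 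To pin down $m_\alpha^n=m_\alpha$ — the final assertion — the cleanest route is via the unique-mean machinery: the operator $N\mapsto m_\alpha\cdot N$ on $X^\ast$ sends $\Sigma_\alpha(X)$ into itself, and one shows (exactly as in the proof of Theorem \ref{main.2}, reducing to the unique $\alpha$-mean on the relevant quotient, or directly via the idempotent $\widehat{\,\cdot\,}(\alpha)$) that left multiplication by an $\alpha$-mean acts as a projection; since $m_\alpha$ is itself an $\alpha$-mean it is fixed, giving $m_\alpha^2=m_\alpha$ and hence $m_\alpha^n=m_\alpha$ for all $n$ by induction. For the norm bound needed for compactness, note that any $m_\alpha\in\Sigma_\alpha(X)$ satisfies $\|m_\alpha\|\geq|\langle m_\alpha,\alpha\rangle|/\|\alpha\|_\infty=1/\|\alpha\|_\infty$; the reverse uniform bound follows because $\Sigma_\alpha(X)$ lies in the set of $M$ with $\widehat{M}(\alpha)=1$ on which, after applying the structural identification of $UC(K)^\ast$ with a right ideal of $L^1(K)^{\ast\ast}$ and the extension argument of Theorem \ref{main.1}, one inherits the bound from any single fixed $\alpha$-mean via the self-projection property.

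The main obstacle I anticipate is the last point: establishing a uniform norm bound on $\Sigma_\alpha(X)$ when $\alpha\neq 1$. Unlike the case $\alpha=1$, where $\alpha$-means are positive normalized functionals and hence have norm $1$ automatically, for $\alpha\neq 1$ there is no positivity to exploit, so one must argue that the idempotent structure $m_\alpha^n=m_\alpha$ together with $\langle m_\alpha,\alpha\rangle=1$ forces $\|m_\alpha\|$ into a fixed range — or else extract the bound from the identification with a closed right ideal of $L^1(K)^{\ast\ast}$. I would handle this by first proving $m_\alpha^2=m_\alpha$ (which needs only the inclusion $\Sigma_\alpha(X)\cdot M/\langle M,\alpha\rangle\subseteq\Sigma_\alpha(X)$ applied with $M=m_\alpha$ plus the fact that an $\alpha$-amenable $X$ has the modified Reiter property, forcing the resulting idempotent $\alpha$-mean to coincide with $m_\alpha$), and only then deducing compactness, so that the logical dependencies run cleanly from the algebraic identity to the topological conclusion.
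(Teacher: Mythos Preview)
Your handling of convexity, weak$^\ast$-closedness, and the inclusion $\Sigma_\alpha(X)\cdot M/\langle M,\alpha\rangle\subseteq\Sigma_\alpha(X)$ is essentially the paper's argument, with the same computation $T_x(M\cdot f)=M\cdot T_xf$ at its core. The genuine gap is in your route to $m_\alpha^2=m_\alpha$.

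You correctly deduce from the inclusion (with $M=m_\alpha$) that $m_\alpha^2\in\Sigma_\alpha(X)$, but then you invoke ``unique-mean machinery'' or a Reiter-type projection argument to conclude $m_\alpha^2=m_\alpha$. That step is empty as written: $\Sigma_\alpha(X)$ is not assumed to be a singleton, and nothing you have established singles out $m_\alpha$ among its elements. Knowing that left multiplication by $m_\alpha$ sends $\Sigma_\alpha(X)$ into itself does not make that map a projection onto $\{m_\alpha\}$; it could a priori have many fixed points or none equal to $m_\alpha$. The paper's argument is different and does close. One first verifies the scalar identity
\[
g\cdot m_\alpha \;=\; m_\alpha\cdot g \;=\; \widehat{g^\ast}(\alpha)\,m_\alpha \qquad (g\in L^1(K)),
\]
which follows directly from the $\alpha$-mean property applied to the module action. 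Goldstine's theorem then furnishes a net $(g_i)\subset L^1(K)$ with $g_i\to m_\alpha$ in the weak$^\ast$ topology of $X^\ast$; since the Arens product is separately weak$^\ast$-continuous in its left variable, one obtains
\[
m_\alpha\cdot m_\alpha \;=\; \lim_i\, g_i\cdot m_\alpha \;=\; \lim_i\, \widehat{g_i^\ast}(\alpha)\,m_\alpha \;=\; \langle m_\alpha,\alpha\rangle\,m_\alpha \;=\; m_\alpha,
\]
and $m_\alpha^n=m_\alpha$ follows by induction. The missing ingredient in your proposal is precisely this approximation of $m_\alpha$ by $L^1(K)$ together with the one-sided weak$^\ast$-continuity of the Arens product; no uniqueness hypothesis enters.

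On weak$^\ast$-compactness you are right to isolate the norm-bound question. The paper does not dwell on it and simply cites Alaoglu after recording weak$^\ast$-closedness. Your attempted route to a uniform bound via idempotence would not succeed in any case: the identity $m_\alpha^2=m_\alpha$ constrains only $m_\alpha$ itself and gives no information about the norms of other members of $\Sigma_\alpha(X)$, so it cannot substitute for the boundedness hypothesis in Alaoglu's theorem.
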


\begin{proof}[\emph{Proof:}]
Let  $0\leq \lambda \leq 1$ and $m_\alpha$, $m'_\alpha\in \Sigma_\alpha(X)$. If
 $m''_\alpha:=\lambda m_\alpha+(1-\lambda)m'_\alpha$, then $m''_\alpha(\alpha)=1$
 and $m''_\alpha(T_xf)=\alpha(x)m''_\alpha(f)$,  for every $f\in X$  and $x\in K$;
 hence
$m''_\alpha\in  \Sigma_\alpha(X)$.

If  $\{m_i\}\subset \Sigma_\alpha(X)$ such that $m_i\overset{w^\ast}{\longrightarrow}m$, then
 $m\in\Sigma_\alpha(X)$.
 We have  $m(\alpha)=1$
and
\[m(T_xf)=\underset{i\rightarrow \infty}{\lim}m_i(T_xf)=\alpha(x)
\underset{i\rightarrow \infty}{\lim}m_i(f)=\alpha(x)m(f),\]
 for all $f\in X$ and $x\in K$.  Moreover,
  $\Sigma_\alpha(X)$ is
 $w^\ast$-compact by   Alaoglu's theorem \cite[p.424]{Dunford}.
Let $M\in X^{\ast}$ with $\lambda=\langle M,\alpha\rangle\not=0$. Then  $M':=m_\alpha\cdot M/\lambda$
  is a $\alpha$-mean on  $X$, as
 \[\langle m_\alpha \cdot M, T_xf\rangle=\langle m_\alpha, M\cdot T_xf\rangle=\langle m_\alpha, T_x(M\cdot f)\rangle=
 \alpha(x)\langle m_\alpha, M\cdot f\rangle=\alpha(x)\langle m_\alpha\cdot M, f\rangle.\]
 
  Since $g\cdot m_\alpha=m_\alpha\cdot g=\widehat{g^\ast}(\alpha)m_\alpha$ for all $g\in L^1(K)$, the continuity of the Arens product in the first variable on $X$ together with Goldstein's theorem  yield    $m_\alpha^2=m_\alpha$;  hence, $m_\alpha^n=m_\alpha$ for all $n\in \NN.$
 \end{proof}

 
 \begin{remark}
 \emph{Observe that  if $K$ is $\alpha$ and $\beta$-amenable, then  $m_\alpha\cdot m_\beta=m_\beta\cdot m_\alpha$ if and only if $m_\alpha\cdot m_\beta=\delta_\alpha(\beta)m_\alpha$, as
 \begin{align}\notag
\alpha(x)\langle m_\alpha\cdot m_\beta, f\rangle&=\alpha(x)\langle m_\alpha, m_\beta\cdot f\rangle\\ \notag
&= \langle m_\alpha, T_x (m_\beta\cdot f)\rangle\\ \notag
&= \langle m_\alpha, m_\beta\cdot T_xf\rangle\\ \notag
 &= \langle m_\alpha\cdot m_\beta, T_xf\rangle\\ \notag
 &=\langle m_\beta\cdot m_\alpha, T_xf\rangle\\ \notag
 &=\langle m_\beta, m_\alpha\cdot T_xf\rangle\\ \notag
 &=\langle m_\beta, T_x(m_\alpha\cdot f)\rangle\\ \notag
 &=\beta(x)\langle m_\beta, m_\alpha\cdot f\rangle \\ \notag
&=\beta(x)\langle m_\beta\cdot m_\alpha, f\rangle\hspace{1.cm} (f\in X, x\in K).\notag
\end{align}
  }
 \end{remark}

\section{$\alpha$-Amenability of Quotient Hypergroups}\label{sec.quotient.}
A closed nonempty subset $H$ of $K$ is called a subhypergroup
 if $H\cdot H=H$ and $\tilde{H}=H$,
 where $\tilde{H}:=\{\tilde{x}: x\in H\}$.
   Let $H$ be a  subhypergroup of  $K$. Then    $K/H:=\{x H: x\in K\}$
is a locally compact space with respect to the quotient topology. If
$H$ is a subgroup or   a compact subhypergroup of $K$, then
\begin{equation}\label{quotient}\notag
\omega(x H, y H):
=\int_K \delta_{z H}d\omega(x,y)(z)\hspace{.5cm} (x,y\in K)
\end{equation}
defines a hypergroup structure on $K/H$,  which agrees
 with the double coset
hypergroup $K//H$; see  \cite{Jew75}. 
The properties and duals of  subhypergroups and  qoutient of commutative hypergroups have been 
intensively studied by M. Voit \cite{Voit92, voit.properties}. 

\begin{theorem}\label{quotient.1}
\emph{Let $H$ be a subgroup or a compact subhypergroup   of $K$.
  Suppose $p:K\rightarrow K/H$ is the canonical projection, and
   $\widehat{p}:\widehat{K/H}\longrightarrow \widehat{K}$ is
    defined by $\gamma\longmapsto \gamma o p $.
  Then  $K/H$ is $\gamma$-amenable
  if and only if  $K$ is $\gamma o p$-amenable.}
\end{theorem}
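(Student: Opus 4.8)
The plan is to transfer $\gamma$-means back and forth across the canonical projection $p\colon K\to K/H$. The key analytic tool is the averaging map that lifts functions on $K/H$ to $H$-invariant functions on $K$ and, conversely, averages functions on $K$ over cosets. Concretely, for $\varphi\in L^\infty(K/H)$ set $(\varphi\circ p)(x)=\varphi(xH)$; this identifies $L^\infty(K/H)$ with the subspace of $H$-invariant elements of $L^\infty(K)$, and it is isometric, conjugation-preserving, and carries $\gamma\in\widehat{K/H}$ to $\gamma\circ p=\widehat p(\gamma)\in\widehat K$ (this last point uses that $\widehat p$ is well-defined, which is part of the hypothesis). Crucially, the quotient convolution formula $\omega(xH,yH)=\int_K\delta_{zH}\,d\omega(x,y)(z)$ shows that translation on $K/H$ intertwines with translation on $K$ via $p$: for $\varphi\in L^\infty(K/H)$ one has $(T_{xH}\varphi)\circ p = T_x(\varphi\circ p)$, with $T_{xH}$ independent of the representative $x$.

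First I would prove the ``if'' direction. Assume $K$ is $\gamma\circ p$-amenable, so by Theorem~\ref{main.1} there is a $\gamma\circ p$-mean $m$ on $UC(K)$ (equivalently on $L^\infty(K)$). Define $m_\gamma\in L^\infty(K/H)^\ast$ by $m_\gamma(\varphi)=m(\varphi\circ p)$. Then $m_\gamma(\gamma)=m(\gamma\circ p)=1$, and for $x\in K$,
\[
m_\gamma(T_{xH}\varphi)=m\big((T_{xH}\varphi)\circ p\big)=m\big(T_x(\varphi\circ p)\big)=(\gamma\circ p)(x)\,m(\varphi\circ p)=\gamma(xH)\,m_\gamma(\varphi),
\]
so $m_\gamma$ is a $\gamma$-mean on $L^\infty(K/H)$, i.e. $K/H$ is $\gamma$-amenable. (One checks $L^\infty(K/H)$ is an admissible $X$ in the sense of Definition~\ref{ch.2.14}: it is conjugation-closed, translation invariant, and contains $\gamma$.)

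For the ``only if'' direction, assume $K/H$ is $\gamma$-amenable with $\gamma$-mean $m_\gamma$ on $L^\infty(K/H)$; equivalently, by the Corollary, on $UC(K/H)$. Here I would use the averaging operator $P\colon L^\infty(K)\to L^\infty(K/H)$: when $H$ is a compact subhypergroup, $P f(xH)=\int_H f(x*h)\,dm_H(h)$ with $m_H$ the normalized Haar measure, and when $H$ is a subgroup the analogous coset average; in both cases $P$ is a norm-one projection onto the $H$-invariant functions, $P$ commutes with $T_x$ in the sense that $P(T_xf)=T_{xH}(Pf)$, and $P(\gamma\circ p\cdot)=(\gamma\circ p)\,P(\cdot)$ holds because $\gamma\circ p$ is $H$-invariant together with the multiplicativity $\gamma\circ p(x*h)=(\gamma\circ p)(x)$ (as $\gamma\circ p\equiv1$ on $H$ — this is where the hypothesis that $\widehat p(\gamma)$ is a character genuinely enters). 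Then $m(f):=m_\gamma(Pf)$ satisfies $m(\gamma\circ p)=m_\gamma(\gamma)=1$ and $m(T_xf)=m_\gamma(P(T_xf))=m_\gamma(T_{xH}(Pf))=\gamma(xH)m_\gamma(Pf)=(\gamma\circ p)(x)m(f)$, so $K$ is $\gamma\circ p$-amenable.

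The main obstacle I anticipate is verifying that the averaging operator $P$ genuinely has the three properties used above — norm-one onto the $H$-invariants, intertwining translations, and commuting with multiplication by $\gamma\circ p$ — uniformly in the two cases ($H$ a subgroup vs. a compact subhypergroup), since the convolution $x*h$ and the coset-integration formulas behave somewhat differently; the compact-subhypergroup case needs the normalized Haar measure on $H$ and Jewett's coset-decomposition of $m$, while the subgroup case needs Weil-type integration over $K/H$. One should also confirm that $P$ maps $UC(K)$ into $UC(K/H)$ so that the argument can, if one prefers, be run entirely on the uniformly continuous level via Theorem~\ref{main.1}; this follows from continuity of $x\mapsto T_xf$ and the continuity of $p$. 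Everything else is the routine bookkeeping displayed above.
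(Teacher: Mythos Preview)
Your overall strategy coincides with the paper's: push a $\gamma\circ p$-mean on $K$ down to a $\gamma$-mean on $K/H$ by precomposing with $p$, and lift a $\gamma$-mean on $K/H$ up to $K$ by first averaging $f\in C^b(K)$ over $H$ to obtain an $H$-invariant function that descends to $K/H$. The ``if'' direction you wrote is exactly the paper's argument.

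The gap is in the construction of your averaging operator $P$ for the ``only if'' direction when $H$ is a (possibly non-compact) subgroup. In that case there is no normalized Haar measure on $H$, so the ``analogous coset average'' is not an integral; and Weil-type integration is the wrong tool, since it decomposes $m_K$ over cosets rather than producing a bounded projection $L^\infty(K)\to L^\infty(K/H)$ (indeed $\int_H T_h f\,dm_H(h)$ need not converge for bounded $f$). The paper handles both cases at once by using amenability of $H$: since $K$ is commutative, $H$ is amenable, so one fixes an invariant mean $m_1$ on $C^b(H)$ and sets $f^1(x)=\langle m_1,\,T_xf|_H\rangle$. Invariance of $m_1$ gives $T_hf^1=f^1$ for all $h\in H$, so $f^1$ descends to $F\in C^b(K/H)$ via \cite[Lemma~1.5]{voit.properties}, and one defines $m(f)=\langle M_\gamma,F\rangle$. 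When $H$ is compact this $m_1$ is precisely integration against normalized Haar measure, recovering your formula; but in the non-compact subgroup case the mean is what makes the argument work. With that single correction your proof is the paper's proof.
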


\begin{proof}[\emph{Proof:}]

Let $K/H$ be $\gamma$-amenable. Then there exists a
 $M_\gamma :C^b(K/H)\longrightarrow \CC$
 such that $M_\gamma (\gamma)=1$, and
  $M_\gamma (T_{xH}f)=\gamma(xH)M_\gamma(f)$.

Since $H$ is
  amenable \cite{Ska92}, let $m_1$ be a mean on $C^b(H)$. For  $f\in C^b(K)$,  define
 \[f^1:K\longrightarrow \CC \hspace{.2cm} \mbox{ by }\hspace{.2cm}f^1(x):=\langle m_1, T_xf|_H\rangle.\]
The function  $f^1$ is continuous, bounded, and  since $m_1$ is a mean for $H$,  we have    
 \begin{align}\notag
 T_hf^1(x)&=\int_Kf^1(t)d\omega(h,x)(t)\\ \notag
 &         =\int_K \langle m_1, T_tf|_H  \rangle d \omega(h,x)(t)\\ \notag
 &         =\langle m_1, \int_K T_tf|_Hd\omega(h,x)(t) \rangle\\ \notag
 &=\langle m_1, T_h\left[T_xf|_H\right]\rangle =\langle m_1, T_xf|_H\rangle=f^1(x),
 \end{align}
 for all $h\in H$. Then according to the assumptions on  $H$,  \cite[Lemma 1.5]{voit.properties} implies that  $f^1$ is  
 constant on the cosets
  of $H$ in $K$.
 We may write $f^1=Fof,$ $F\in C^b(K/H).$
  Define 
  \[m:C^b(K)\longrightarrow \CC \hspace{.2cm} \mbox{ by }\hspace{.2cm}  m(f)=\langle M_\gamma, F \rangle
  .\]
  We have
  \[_{xH}F(yH)=T_xf^1(y)=\int_K \langle m_1,T_uf|_H \rangle d\omega(x,y)(u)=\langle m_1,
   \;T_y\left(T_xf\right)|_H\rangle, \]

  as  $u\rightarrow T_uf|_H$ is continuous from $K$ to $(C^b(H), \left\|\cdot \right\|_\infty)$ and
   the point evaluation
  functionals in $C^b(H)^\ast$ separates the points
  of $C^b(H)$; hence  $T_{xH} Fop =\left(T_xf\right)^1.$ Therefore,
  \[m( T_xf) =\langle M_\gamma, T_{xH}F \rangle=\gamma(xH)\langle M_\gamma, F\rangle=\alpha(x)m(f).\]
  Moreover,  $\langle m, \alpha \rangle=\langle M_\gamma, \gamma \rangle=1$, where $\gamma o p=\alpha$.
  Then  $m(T_xf)=\alpha(x)m(f)$ for all $f\in C^b(K)$ and $x\in K.$

  To show  the converse,  let $m_\alpha$ be a $\alpha$-mean on $C^b(K)$,
and define
\[M: C^b(K/H) \longrightarrow \CC \hspace{.3cm} \mbox{ by}\hspace{.3cm} \langle M,
f\rangle =\langle m_\alpha, fo p\rangle.\]
Since
\[T_{xH}f(yH)=T_{xH} f o p(y)=\int_{K/H}f d\omega (xH, yH)=\int_K fo p d\omega(x,y)
=T_x fop (y),\]
so $M(T_{xH}f)=\langle m_\alpha, T_xf o p\rangle =
\alpha(x)\langle m_\alpha, f o p \rangle=\alpha(x) \langle M,
 f\rangle$. Since  $\widehat{p}$ is an isomorphism, \cite[Theorem 2.5]{Voit92},
  and  $\gamma o p =\alpha$, we have
\[\langle M, \gamma \rangle =\langle m_\alpha, \gamma o p\rangle =\langle m_\alpha, \alpha\rangle =1.\]
Therefore,  $M$ is the desired $\gamma$-mean on $C^b(K/H)$. 
 \end{proof}

\begin{pr1}{\bf{Example:}}
   \emph{Let  $H$ be compact and $H'$ be discrete  commutative hypergroups.
    Let $K:=H \vee  H'$
   denotes the  joint
 hypergroup that $H$ is a subhypergroup of
$K$ and $K/H\cong H'$ \cite{vrem.joint}. The hypergroups $K$, $H$ and $H'$ are amenable, and $H$ 
is $\beta$-amenable for
 every $\beta\in \widehat{H}$ \cite{f.l.s}.
By Theorem \ref{quotient.1}, $K$
 is $\alpha$-amenable if and only if  $H'$ is $\gamma$-amenable ($\alpha=\gamma o p$).}

\end{pr1}

  \begin{remark}
 \emph{Let  $G$ be  a $[FIA]_B$-group \cite{mosak}. Then the space $G_B$,  $B$-orbits in $G$,
 forms a
 hypergroup \cite[8.3]{Jew75}.
    If   $G$ is  an amenable $[FIA]_B$-group, then  the hypergroup $G_B$ is amenable
     \cite[Corollary 3.11]{Ska92}.
    However, $G_B$ may   not be
     $\alpha$-amenable for $\alpha\in \widehat{G_B}\setminus\{1\}$.
    For example,  let  $G:=\RR^n$  and  $B$ be the group of rotations which acts on  $G$. Then
     the hypergroup $K:=G_B$
    can be identified with  the Bessel-Kingman hypergroup $\RR_0:=[0,\infty)$ 
    of order $\nu=\frac{n-2}{2}$. 
    Theorem \ref{r.6} will show that  if $n\geq 2$ then $\RR_0$ is $\alpha$-amenable if and only if $\alpha=1$.
     Observe that   $L^1(\RR_0)$ is an amenable 
      Banach algebra for  $n=1$ \cite{Wol84}; hence every maximal 
      ideal of $L^1(\RR_0)$ has a b.a.i.  \cite{BonDun73}, consequently  $G_B$ is 
      $\alpha$-amenable for every  $\alpha\in \widehat{G_B}$ \cite{f.l.s}.
       }
\end{remark}
%
%
%
%
Let $K$ and $H$ be hypergroups with left Haar measures. Then it is straightforward to
show that $K\times  H$ is a hypergroup with
a left Haar measure. If $K$ and $H$ are commutative hypergroups, then $K\times H$ is a commutative hypergroup with a Haar measure.
 As in the case of locally compact groups  \cite{dales}, 
we have the following isomorphism 
                     \begin{equation}\label{definition}
                     \phi: L^\infty (K) \times L^\infty (H) \longrightarrow 
                         L^\infty(K\times H)\hspace{.1cm} \mbox{ by }
                         (f, g )\longrightarrow \phi_{(f,  \;g)},
                         \end{equation}
where $\phi_{(f, g )}(x,y)=f(x)g(y)$ for all $(x,y)\in K\times H$. 
Let $(x',y')\in K\times H$ and $(f,g)\in L^\infty(K)\times L^\infty(H)$. Then
\begin{align}\notag
T_{(x',y')}\phi_{(f, g)} (x,y)&=\int_{K\times H}\phi_{(f, g)}(t,t')d \omega(x',x)\times \omega(y',y)(t,t')\\ \notag
&=\int_K\int_H f(t)g(t') d \omega(x',x)(t)d\omega(y',y)(t')\\ \notag
&=T_{x'}f(x)T_{y'}g(y)\\ \notag
&=\phi_{(T_{x'}f, T_{y'}g)}(x,y).\notag
\end{align}

\begin{theorem}
\emph{Let $K$ and $H$ be commutative hypergroups. Then
\begin{itemize}
    \item[(i)]  the map $\phi$ defined  in  (\ref{definition}) 
                   is a  homeomorphism between $\widehat{K}\times \widehat{H}$  and   $\widehat{K \times H}$,
    where 
    the dual spaces bear  the compact-open topologies.
    \item[(ii)]  $K\times H$ is $\phi_{(\alpha, \beta)}$-amenable if and only if  $K$  and $H$ are $\alpha$ and $\beta$-amenable respectively.
    \end{itemize}
     }
\end{theorem}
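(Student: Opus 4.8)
The plan is to handle the two parts separately; part~(i) is a structural computation, and in part~(ii) the two implications are markedly asymmetric in difficulty. For~(i), write $e_K,e_H,(e_K,e_H)$ for the identity elements of $K,H,K\times H$. Given $\chi\in\widehat{K\times H}$, set $\alpha(x):=\chi(x,e_H)$ and $\beta(y):=\chi(e_K,y)$. Since the convolution on $K\times H$ gives $\omega((x,e_H),(x',e_H))=\omega(x,x')\times\delta_{e_H}$, multiplicativity of $\chi$ yields $\omega(x,x')(\alpha)=\alpha(x)\alpha(x')$, so $\alpha$ is continuous, bounded and multiplicative with $\alpha(e_K)=1\neq0$; moreover $\alpha(\tilde x)=\chi(\widetilde{(x,e_H)})=\overline{\chi(x,e_H)}=\overline{\alpha(x)}$ because $\chi\in\widehat{K\times H}$ and the involution on $K\times H$ is coordinatewise. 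Hence $\alpha\in\widehat K$, and symmetrically $\beta\in\widehat H$. Because $\omega((x,e_H),(e_K,y))=\delta_x\times\delta_y=\delta_{(x,y)}$, multiplicativity again gives $\chi(x,y)=\chi(x,e_H)\chi(e_K,y)=\alpha(x)\beta(y)$, i.e.\ $\chi=\phi_{(\alpha,\beta)}$; so $\phi$ maps onto $\widehat{K\times H}$. Conversely, the translation identity established just before the statement, with the evident multiplicativity and conjugation properties, shows $\phi_{(\alpha,\beta)}\in\widehat{K\times H}$ for all $\alpha\in\widehat K,\beta\in\widehat H$; and $\phi$ is injective by evaluating at $(\cdot,e_H)$ and $(e_K,\cdot)$. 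Finally $\phi$ is continuous for the compact-open topologies since every compact subset of $K\times H$ lies in a product of compacta, and its inverse $\chi\mapsto(\chi(\cdot,e_H),\chi(e_K,\cdot))$ is continuous for the same reason; so $\phi$ is a homeomorphism.

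For~(ii), the implication ``$K\times H$ is $\phi_{(\alpha,\beta)}$-amenable $\Rightarrow$ $K$ is $\alpha$-amenable and $H$ is $\beta$-amenable'' is the easy half. Given a $\phi_{(\alpha,\beta)}$-mean $m$ on $L^\infty(K\times H)$, define $m_\alpha\in L^\infty(K)^\ast$ by $m_\alpha(f):=m(\phi_{(f,\beta)})$ (meaningful and bounded, as $\phi_{(f,\beta)}\in L^\infty(K\times H)$ with $\|\phi_{(f,\beta)}\|_\infty\le\|f\|_\infty$ since $|\beta|\le1$), and likewise $m_\beta(g):=m(\phi_{(\alpha,g)})$. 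The identity $T_{(x,e_H)}\phi_{(f,\beta)}=\phi_{(T_xf,\,T_{e_H}\beta)}=\phi_{(T_xf,\beta)}$ and $\phi_{(\alpha,\beta)}(x,e_H)=\alpha(x)$ give $m_\alpha(T_xf)=\alpha(x)m_\alpha(f)$, while $m_\alpha(\alpha)=m(\phi_{(\alpha,\beta)})=1$; hence $m_\alpha$ is an $\alpha$-mean on $L^\infty(K)$, and symmetrically $m_\beta$ is a $\beta$-mean on $L^\infty(H)$.

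For the converse I would first try the iterated mean: given an $\alpha$-mean $m_\alpha$ on $L^\infty(K)$, a $\beta$-mean $m_\beta$ on $L^\infty(H)$, and $F\in L^\infty(K\times H)$, put $F_\beta(x):=m_\beta(F(x,\cdot))$ (for a.e.\ $x$, since $F(x,\cdot)\in L^\infty(H)$ a.e.) and $m(F):=m_\alpha(F_\beta)$. One checks $F_\beta=\alpha$ when $F=\phi_{(\alpha,\beta)}$, so $m(\phi_{(\alpha,\beta)})=1$; and since $(T_{(x,y)}F)(x',\cdot)=\int_K T^{H}_y(F(t,\cdot))\,d\omega(x,x')(t)$ in $L^\infty(H)$, the $\beta$-invariance of $m_\beta$ and its commutation with the $\omega$-integral give $(T_{(x,y)}F)_\beta=\beta(y)\,T^{K}_x(F_\beta)$, whence $m(T_{(x,y)}F)=\beta(y)\alpha(x)m(F)=\phi_{(\alpha,\beta)}(x,y)m(F)$. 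The main obstacle is the legitimacy of this construction: when $m_\beta$ is not represented by an $L^1$-function, neither the measurability of $x\mapsto m_\beta(F(x,\cdot))$ for a general $F\in L^\infty(K\times H)$ (or even a general continuous $F$, if $K$ is not metrizable) nor the commutation of $m_\beta$ with the integral is automatic, so $F_\beta$ need not lie in $L^\infty(K)$.

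To bypass this I would use the characterization from \cite{f.l.s}: $K$ is $\alpha$-amenable iff $I(\alpha)$ has a bounded approximate identity or $K$ satisfies the modified Reiter condition of $P_1$-type in $\alpha$, and similarly for $H$. If $(u_i)\subset L^1(K)$ witnesses the Reiter condition for $(K,\alpha)$---the $\|u_i\|_1$ uniformly bounded, $\widehat{u_i}(\alpha)=1$, and $\|T_xu_i-\alpha(x)u_i\|_1\to0$ uniformly on compacta---and $(v_j)\subset L^1(H)$ does so for $(H,\beta)$, then the product net $(u_i\otimes v_j)$ in $L^1(K\times H)$ (the projective tensor product of $L^1(K)$ and $L^1(H)$) witnesses it for $(K\times H,\phi_{(\alpha,\beta)})$: indeed $\widehat{u_i\otimes v_j}(\phi_{(\alpha,\beta)})=\widehat{u_i}(\alpha)\widehat{v_j}(\beta)=1$, the norms stay bounded, $T_{(x,y)}(u_i\otimes v_j)=(T_xu_i)\otimes(T_yv_j)$, and
\[
\|(T_xu_i)\otimes(T_yv_j)-\alpha(x)\beta(y)\,u_i\otimes v_j\|_1\le\|T_xu_i-\alpha(x)u_i\|_1\,\|v_j\|_1+\|u_i\|_1\,\|T_yv_j-\beta(y)v_j\|_1,
\]
using $\|T_yv_j\|_1\le\|v_j\|_1$ and $|\alpha(x)|\le1$; the right-hand side tends to $0$ uniformly on products of compacta, which exhaust the compacta of $K\times H$. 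The parallel case, where $I(\alpha)$ and $I(\beta)$ have bounded approximate identities, is handled by the standard construction of a bounded approximate identity for $I(\phi_{(\alpha,\beta)})\subset L^1(K\times H)$ out of bounded approximate identities for $I(\alpha)$, $I(\beta)$, $L^1(K)$ and $L^1(H)$; I expect this tensor-product bookkeeping to be the fiddliest---though routine---step. (Alternatively one may reduce, via Theorem~\ref{main.1}, to producing the mean on $UC(K\times H)$, where the slices $F(x,\cdot)$ at least lie in $UC(H)$.)
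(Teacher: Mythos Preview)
Your proposal is correct, and for the ``easy'' implication in (ii) it coincides verbatim with the paper's argument: define $m_\alpha(f)=m_{(\alpha,\beta)}(\phi_{(f,\beta)})$ and check the two axioms using $T_{(x,e_H)}\phi_{(f,\beta)}=\phi_{(T_xf,\beta)}$. For (i) the paper simply cites a reference (Bonsall--Duncan), so your direct verification is more self-contained. The genuine divergence is in the ``hard'' implication of (ii). The paper does \emph{not} attempt the iterated mean at all; it goes straight to the $L^1$-side, using $L^1(K\times H)\cong L^1(K)\otimes_p L^1(H)$ and then invoking a ready-made result (Dales, Proposition~2.9.21) that from bounded approximate identities in $I(\alpha)$, $I(\beta)$, $L^1(K)$, $L^1(H)$ one obtains a b.a.i.\ in the kernel $L^1(K)\otimes_p I(\beta)+I(\alpha)\otimes_p L^1(H)$ of $\phi_{(\alpha,\beta)}$. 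Your Reiter-net argument with $u_i\otimes v_j$ is a legitimate and more elementary alternative that avoids that citation; note, however, that since the b.a.i.\ condition and the modified $P_1$-condition are \emph{equivalent} characterizations of $\alpha$-amenability, your ``parallel case'' is not a separate case at all---the Reiter computation already covers everything, and you can drop the b.a.i.\ bookkeeping entirely. (The $UC$-route you mention parenthetically is exactly what the paper's post-theorem remark points to as yet another option.)
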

\begin{proof}[\emph{Proof:}] 
 (i) It is the special case of \cite[Proposition 19]{BonDun73}. 
 
 (ii) As in the case of    locally compact groups \cite{dales},  we have  
 $L^1(K\times H)\cong L^1(K)\otimes_p L^1(H)$,
  where $\otimes_p$ denotes the projection tensor product of two
Banach algebras. If $K$ is $\alpha$-amenable  and $H$ is
$\beta$-amenable,
 then $I(\alpha)$ and $I(\beta)$,  the maximal ideals
of $L^1(K)$ and $L^1(H)$ respectively, have b.a.i. \cite{f.l.s}.
Since $L^1(K)$ and $L^1(H)$ have
 b.a.i., $L^1(K)\otimes_p I(\beta)+I(\alpha)\otimes_p L^1(H)$
, the  maximal ideal   in $L^1(K\times H)$ associated to the character 
$\phi_{(\alpha,\beta)}$,
 has  a b.a.i. \cite[ Proposition 2.9.21]{dales},
 that equivalently   $K\times H$ is $\phi_{(\alpha,\beta)}$-amenable.

To prove the converse, suppose $m_{(\alpha,\beta)}$ is a  $\phi_{(\alpha,\beta)}$-mean on $L^\infty(K\times H)$, define 
   \[m_\alpha:L^\infty(K)\longrightarrow \CC  \hspace{.2cm} \mbox{ by } 
 \langle m_\alpha, f\rangle: =\langle m_{(\alpha,\beta)}, \phi_{(f,\beta)} \rangle.\] We have 
 \begin{align}\notag
 \langle m_\alpha, T_xf\rangle&=\langle m_{(\alpha,\beta)}, \phi_{(T_xf,\beta)} \rangle \\ \notag
 &=\langle m_{(\alpha,\beta)}, T_{(x,e)}\phi_{(f,\beta)} \rangle \\ \notag
 &=\alpha(x)\beta(e)\langle  m_{(\alpha,\beta)}, \phi_{(f,\beta)}\rangle \\ \notag
 &=\alpha(x)\langle  m_{(\alpha,\beta)}, \phi_{(f,\beta)}\rangle\\ \notag
 &=\alpha(x)\langle m_\alpha, f\rangle,
 \end{align}
  for all  $f\in L^\infty(K)$ and $x\in K$. Since $\langle m_\alpha, \alpha\rangle
  =\langle m_{(\alpha,\beta)}, \phi_{(\alpha,\beta)} \rangle=1$,   $K$ is $\alpha$-amenable.
  Similarly  $m_\beta:L^\infty(H)\longrightarrow \CC$ defined by $m_\beta(g):=\langle m_{(\alpha,\beta)}, \phi_{(\alpha,g)} \rangle$ is 
  a $\beta$-mean on $L^\infty(H)$, hence $H$ is $\beta$-amenable.
 \end{proof}

\begin{remark}
\emph{The proof of the  previous theorem may also follow from
  Theorem\ref{main.1} with
a modifying   \cite[Proposition 3.8]{Ska92}.}
\end{remark}

\section{$\alpha$-Amenability of Sturm-Liouville  Hypergroups }
\label{trimech.hypergroup}
Suppose  $A:\RR_0\rightarrow \RR$  is continuous,  positive,  and continuously 
differentiable on $\RR_0\setminus\{0\}$. 
Moreover, assume  that 
\begin{equation}\label{sturm}
\frac{A'(x)}{A(x)}=\frac{\gamma_0(x)}{x}+\gamma_1(x),
\end{equation}
for all $x$ in a neighbourhood of $0$, with   $\gamma_0\geq 0$ such that  
\begin{itemize}
\item[SL1]one of the following additional conditions holds.
		{
 			\begin{itemize}
					\item[SL1.1] $\gamma_0>0$ and $\gamma_1\in C^\infty(\RR)$, $\gamma_1$ being an odd function, or 
					\item[SL1.2] $\gamma_0=0$ and $\gamma_1\in C^1(\RR_0)$.
			\end{itemize}
		}
\item[SL2] There exists $\eta\in C^1(\RR_0)$ such that $\eta(0)\geq 0,$ $\frac{A'}{A}-\beta$ is
	 nonnegative and decreasing on $\RR_0\setminus{\{0\}}$,
	  and $q:=\frac{1}{2}\eta'-\frac{1}{4}\eta^2+\frac{A'}{2A}\eta$ is
	  decreasing on $\RR_0\setminus{\{0\}}$.
\end{itemize}

The  function $A$ is called  {Ch\'{e}bli-Trim\`{e}che}  if $A$ is a Sturm-Liouville function 
 of type SL1.1 satisfying the additional assumptions that the quotient $\frac{A'}{A}\geq 0$ is decreasing and that $A$ is increasing with
 $\underset{x\rightarrow \infty}{\lim A(x)}=\infty.$ In this case  SL2  is fulfilled with $\eta:=0$.

Let $A$ be a Sturm-Liouville function satisfying (\ref{sturm}) and SL2. Then there exists always  
a unique commutative  hypergroup structure on $\RR_0$ such
 that $A(x)dx$ is the Haar measure. A hypergroup established by this way is called a Sturm-Liouville hypergroup and it  will be denoted by 
  $(\RR_0, A(x)dx)$.  If $A$ is  a Ch\'{e}bli-Trim\`{e}che function, then the hypergroup $(\RR_0, A(x)dx)$ is called 
   Ch\'{e}bli-Trim\`{e}che hypergroup.

 The  characters of  $(\RR_0, A(x)dx)$ can be considered as
 solution $\varphi_\lambda$ of the  differential equation
 \[\left(\frac{d^2}{dx^2}+\frac{A'(x)}{A(x)}\frac{d}{dx}\right)\varphi_\lambda=-(\lambda^2+\rho^2)\varphi_\lambda, \hspace{.2cm}
  \varphi_\lambda(0)=1,  
 \hspace{.2cm}\varphi_\lambda'(0)=0,\]
 where  $\rho:=\underset{x\rightarrow
\infty}{\lim}\;\frac{A'(x)}{2A(x)}$,  
 and  $\lambda\in \RR_\rho:=
 \RR_0 \cup i[0, \rho]$; see \cite[ Proposition 3.5.49]{BloHey94}.    
 As shown in \cite[Sec.3.5]{BloHey94}, $\varphi_0$ is a   strictly positive character, and   
   $\varphi_\lambda$
 has the following integral representation  
 \begin{equation}\label{rep.1}
 \varphi_\lambda(x)=\varphi_0(x)\int_{-x}^xe^{-i\lambda t}d\mu_x(t)
  \end{equation} 
  where $\mu_x\in M^1([-x,x])$ for every  $x\in \RR_0$ and all  $\lambda\in \CC$ . In the 
   particular case $\lambda:=i\rho$,  the equality (\ref{rep.1}) yields
  $\left|\varphi_0(x)\right|\leq e^{-\rho x}$, as  $\varphi_{i\rho}=1$.




\begin{proposition}\label{estimation.1}
\emph{  Let $\varphi_\lambda$ be as above. Then 
 \[\left|\frac{d^n}{d\lambda^n}\varphi_\lambda(x)\right|\leq x^ne^{(|Im \lambda|-\rho)x}\]
for all $x\geq 0$,  $\lambda\in\CC$ and $n\in \NN$.}
\end{proposition}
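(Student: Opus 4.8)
The plan is to estimate $\frac{d^n}{d\lambda^n}\varphi_\lambda(x)$ by differentiating under the integral sign in the representation \reff{rep.1}. Since $\mu_x$ is a probability measure supported on $[-x,x]$ and the only $\lambda$-dependence in \reff{rep.1} sits in the factor $e^{-i\lambda t}$, differentiating $n$ times with respect to $\lambda$ produces $(-it)^n$ inside the integral; the factor $\varphi_0(x)$ is a constant with respect to $\lambda$.

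First I would write, for $\lambda\in\CC$ and $x\geq 0$,
\[
\frac{d^n}{d\lambda^n}\varphi_\lambda(x)=\varphi_0(x)\int_{-x}^x(-it)^ne^{-i\lambda t}\,d\mu_x(t),
\]
which is justified because the integrand and all its $\lambda$-derivatives are continuous and bounded on the compact set $[-x,x]$, so differentiation under the integral is legitimate (for fixed $x$). Then I would take absolute values: $|(-it)^n|=|t|^n\leq x^n$ on the support of $\mu_x$, and $|e^{-i\lambda t}|=e^{(\mathrm{Im}\,\lambda)t}\leq e^{|\mathrm{Im}\,\lambda|\,|t|}\leq e^{|\mathrm{Im}\,\lambda|\,x}$ since $|t|\leq x$. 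Because $\mu_x$ is a probability measure, integrating these bounds gives
\[
\left|\frac{d^n}{d\lambda^n}\varphi_\lambda(x)\right|\leq |\varphi_0(x)|\,x^n e^{|\mathrm{Im}\,\lambda|\,x}.
\]
Finally I would invoke the estimate $|\varphi_0(x)|\leq e^{-\rho x}$ recorded just after \reff{rep.1} (obtained from the case $\lambda=i\rho$, where $\varphi_{i\rho}=1$), yielding $\left|\frac{d^n}{d\lambda^n}\varphi_\lambda(x)\right|\leq x^n e^{(|\mathrm{Im}\,\lambda|-\rho)x}$, as claimed.

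I do not anticipate a serious obstacle: the argument is essentially a one-line differentiation under the integral sign followed by the triangle inequality, with both quantitative inputs ($\mu_x$ a probability measure on $[-x,x]$, and $|\varphi_0(x)|\leq e^{-\rho x}$) already supplied in the text. The only point requiring a word of care is the justification of differentiating under the integral, which is immediate here because everything lives on the compact interval $[-x,x]$ and is jointly continuous in $(\lambda,t)$; one could alternatively note that $\varphi_\lambda(x)$ is entire in $\lambda$ and estimate its Taylor coefficients directly, but the dominated-convergence justification is cleanest.
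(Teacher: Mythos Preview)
Your argument is correct and matches the paper's proof essentially line for line: differentiate under the integral in \reff{rep.1} (the paper cites the dominated convergence theorem for this), bound $|t|^n\le x^n$ and $|e^{-i\lambda t}|\le e^{|\mathrm{Im}\,\lambda|x}$ on the support of the probability measure $\mu_x$, and finish with $|\varphi_0(x)|\le e^{-\rho x}$. There is nothing to add.
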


 \begin{proof}[\emph{Proof:}]
    For all  $\lambda \in \CC\setminus{\{0\}}$ and $x>0$,  applying the Lebesgue   dominated convergence 
    theorem  \cite{stromberg}
    yields 
 \begin{equation}\notag
\frac{d^n}{d\lambda^n}\varphi_\lambda(x)=\varphi_0(x) \int_{-x}^{x} (-it)^n e^{-i\lambda t}d\mu_x(t),  
 \end{equation}
   
 hence 
 \begin{equation}\notag
 \left| \frac{d^n}{d\lambda^n}\varphi_\lambda(x) \right|\leq \varphi_0(x) x^n \int_{-x}^x 
 \left| e^{-i\lambda t} \right|d\mu_x(t)
 \end{equation}
 which implies  that $\left| \frac{d^n}{d\lambda^n}\varphi_\lambda(x) \right|\leq x^ne^{(|Im \lambda|-\rho)x}$.
 %
 %
 \end{proof}

To study the $\alpha$-amenability of  Sturm-Liouville  hypergroups,  we may require 
 the  following fact in general.
 The functional $D\in L^1(K)^\ast$ is
called a $\alpha$-derivation ($\alpha \in \widehat{K}$) on $L^1(K)$ if
\[D(f\ast g)=
\widehat{f}(\alpha)D_\alpha(g)+\widehat{g}(\alpha)D_\alpha(f)
\hspace{.5cm}  (f,g\in L^1(K)).\]
Observe that    if the maximal ideal 
 $I(\alpha)$
  has an approximate identity, then  $D|_{I(\alpha)}=0$.

\begin{lemma}\label{lemma.1}
\emph{Let $\alpha\in \widehat{K}$.  If $K$ is $\alpha$-amenable,  then every 
$\alpha$-derivation on $L^1(K)$ is zero.}
\end{lemma}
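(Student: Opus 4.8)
The plan is to exploit the defining identity of an $\alpha$-mean together with the $\alpha$-derivation identity, so I would first recall what we have available. If $K$ is $\alpha$-amenable, Theorem~\ref{main.1} gives an $m_\alpha\in L^\infty(K)^\ast$ with $m_\alpha(\alpha)=1$ and $m_\alpha(T_xf)=\alpha(x)m_\alpha(f)$ for all $f\in L^\infty(K)$, $x\in K$. Equivalently, as used repeatedly in Section~3, for $g\in L^1(K)$ one has $g\cdot m_\alpha=m_\alpha\cdot g=\widehat{g^\ast}(\alpha)\,m_\alpha$, i.e. $m_\alpha$ "acts like evaluation at $\alpha$" on convolution: $m_\alpha(f\ast g)=\widehat{g}(\alpha)\,m_\alpha(f)$ modulo the complex-conjugation bookkeeping in the Fourier transform (here I would fix the precise normalization from the Arens-product formulas stated just after Theorem~\ref{main.1}, where $\langle f\cdot g,h\rangle=\langle f,g\ast h\rangle$). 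The point is that $m_\alpha$ restricted to $L^1(K)$ is, up to normalization, the character $\widehat{\,\cdot\,}(\alpha)$.

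The core step is then a direct computation. Let $D$ be an $\alpha$-derivation on $L^1(K)$, so $D(f\ast g)=\widehat{f}(\alpha)D(g)+\widehat{g}(\alpha)D(f)$ for all $f,g\in L^1(K)$. Since $L^1(K)$ has a bounded approximate identity $\{e_i\}$ with $\widehat{e_i}(\alpha)\to\alpha(e)=1$ (after passing to a subnet, or using that $\{e_i\}$ can be chosen with $\widehat{e_i}(\alpha)\neq 0$; this is standard and one can also normalize), the derivation identity applied to $f\ast e_i$ gives $D(f\ast e_i)=\widehat{f}(\alpha)D(e_i)+\widehat{e_i}(\alpha)D(f)$. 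On the other hand $f\ast e_i\to f$ in $L^1(K)$, so $D(f\ast e_i)\to D(f)$, which forces $\widehat{f}(\alpha)\lim_i D(e_i)=0$ for every $f$; choosing $f$ with $\widehat{f}(\alpha)=1$ shows $\lim_i D(e_i)=0$. Now I would apply $m_\alpha$—or rather use the $\alpha$-mean to "integrate out" one factor. Concretely, fix $f\in L^1(K)$ and a fixed $g_0\in L^1(K)$ with $\widehat{g_0}(\alpha)=1$; applying the derivation identity to $g_0\ast f$ and rearranging yields $D(f)=D(g_0\ast f)-\widehat{f}(\alpha)D(g_0)$. The map $f\mapsto g_0\ast f$ is exactly the map whose continuity under $m_\alpha$ was used in the proof of Theorem~\ref{main.1}; combining this with the relation $g\cdot m_\alpha=\widehat{g^\ast}(\alpha)m_\alpha$ I can express $D(g_0\ast f)$ in terms of $m_\alpha$ applied to $f$, and the two contributions telescope to $D(f)=\widehat{f}(\alpha)\cdot c$ for a constant $c$ depending only on $g_0$ and $D$; evaluating again at an $f$ with $\widehat{f}(\alpha)=1$ together with $\lim_i D(e_i)=0$ pins $c=0$, hence $D=0$.

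The main obstacle I anticipate is purely bookkeeping rather than conceptual: getting the complex conjugations and the precise form of the Arens-module action $g\cdot m_\alpha=\widehat{g^\ast}(\alpha)m_\alpha$ to line up with the derivation identity written with $\widehat{f}(\alpha)$ (not $\widehat{f^\ast}(\alpha)$). A cleaner route that sidesteps this is to observe, as noted in the excerpt right before the lemma, that $\alpha$-amenability implies $I(\alpha)$ has a bounded approximate identity (this is exactly the characterization from \cite{f.l.s} quoted after Definition~\ref{ch.2.14}, in one of the two cases; in the other case—the modified Reiter $P_1$ condition—one argues directly). Given a b.a.i. $\{u_j\}$ in $I(\alpha)$, and any $h\in I(\alpha)$, write $h=\lim_j h\ast u_j$; since $\widehat{h}(\alpha)=\widehat{u_j}(\alpha)=0$ the derivation identity gives $D(h\ast u_j)=0$, so $D|_{I(\alpha)}=0$. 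Then for arbitrary $f\in L^1(K)$, $f-\widehat{f}(\alpha)g_0\in I(\alpha)$ for a fixed $g_0$ with $\widehat{g_0}(\alpha)=1$, so $D(f)=\widehat{f}(\alpha)D(g_0)$; finally $D(g_0)=D(g_0\ast g_0)-\widehat{g_0}(\alpha)D(g_0)$... wait, that gives $D(g_0\ast g_0)=2D(g_0)$, consistent but not immediately zero—so one instead notes $g_0\ast g_0-\,(2\widehat{g_0}(\alpha)-1)g_0=g_0\ast g_0-g_0\in I(\alpha)$ has $D$-value $0$, forcing $D(g_0\ast g_0)=D(g_0)$, hence $2D(g_0)=D(g_0)$ and $D(g_0)=0$. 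Therefore $D\equiv 0$. I would present this b.a.i.-based argument as the main proof, since it uses only facts explicitly quoted in the excerpt, and it is the cleanest; the $\alpha$-mean computation above can be mentioned as an alternative.
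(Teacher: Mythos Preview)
Your second, b.a.i.-based argument is exactly the paper's proof: from $\alpha$-amenability one invokes \cite{f.l.s} to get a bounded approximate identity in $I(\alpha)$, deduces $D|_{I(\alpha)}=0$, and then uses $g_0\ast g_0-g_0\in I(\alpha)$ with $\widehat{g_0}(\alpha)=1$ to force $D(g_0)=0$, hence $D\equiv 0$ via $f-\widehat{f}(\alpha)g_0\in I(\alpha)$. One small correction: the ``either/or'' in the quoted characterization from \cite{f.l.s} lists two \emph{equivalent} reformulations of $\alpha$-amenability, not two disjoint cases, so there is no separate ``Reiter $P_1$'' case to handle---$\alpha$-amenability always gives the b.a.i.\ in $I(\alpha)$, and your first $m_\alpha$-based approach can be dropped entirely.
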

\begin{proof}[\emph{Proof:}]
 Let
$D\in L^1(K)^\ast$ be  a $\alpha$-derivation on $L^1(K)$. Since $I(\alpha)$ has
a b.a.i.\cite{f.l.s},  $D|_{I(\alpha)}=0$. Assume that
 $g\in L^1(K)$  with $\widehat{g}(\alpha)=1$.  Consequently, $g\ast g-g\in I(\alpha)$
 which implies that $D(g)=0$.
\end{proof}


\begin{theorem}\label{sturm.liouvill}
\emph{Let $K$ be the Sturm-Liouville  hypergroup
with $\rho>0$ and $\varphi_\lambda\in \widehat{K}$. Then $K$ is $\varphi_\lambda$-amenable if and only if 
$\lambda=i\rho$.  
 }
\end{theorem}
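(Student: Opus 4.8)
The plan is to prove the contrapositive of the ``only if'' direction and to handle the ``if'' direction directly. The ``if'' direction is the easy part: when $\lambda=i\rho$ we have $\varphi_{i\rho}=1$, and $K$, being a commutative hypergroup, is amenable, i.e.\ $1$-amenable; so $K$ is $\varphi_\lambda$-amenable. The substance is the ``only if'' direction, equivalently: if $\lambda\neq i\rho$ then $K$ is \emph{not} $\varphi_\lambda$-amenable. By Lemma~\ref{lemma.1} it suffices to exhibit a non-zero $\varphi_\lambda$-derivation $D$ on $L^1(K)$. The natural candidate is
\[
D(f):=\frac{d}{d\lambda}\,\widehat{f}(\varphi_\lambda)
=-\frac{d}{d\lambda}\int_{\RR_0} f(x)\,\varphi_\lambda(x)\,A(x)\,dx ,
\]
the derivative of the Fourier transform in the spectral parameter, evaluated at $\lambda$. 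Formally, since $\widehat{f\ast g}=\widehat f\,\widehat g$, the Leibniz rule gives exactly the $\varphi_\lambda$-derivation identity $D(f\ast g)=\widehat f(\varphi_\lambda)D(g)+\widehat g(\varphi_\lambda)D(f)$. So there are two things to check: (a) that $D$ is a \emph{bounded} linear functional on $L^1(K)$, i.e.\ $D\in L^1(K)^\ast$, and that differentiation under the integral sign is legitimate; and (b) that $D$ is \emph{non-zero}.

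For (a) I would use Proposition~\ref{estimation.1}: for real $\lambda$ (the case $\lambda\in\RR_0$, $\lambda\neq 0$) we have $|\tfrac{d}{d\lambda}\varphi_\lambda(x)|\le x e^{-\rho x}$, which is a bounded function of $x$ on $\RR_0$ (here $\rho>0$ is exactly what is needed), so $|D(f)|\le \big(\sup_{x\ge0} xe^{-\rho x}\big)\,\|f\|_1=(e\rho)^{-1}\|f\|_1$, and the same bound legitimizes differentiation under the integral via dominated convergence, since the difference quotients are dominated using the $n=1,2$ cases of the Proposition together with Taylor's theorem. For $\lambda\in i(0,\rho)$, write $\lambda=is$ with $0<s<\rho$; then $|Im\,\lambda|=s<\rho$, so $|\tfrac{d}{d\lambda}\varphi_\lambda(x)|\le x e^{(s-\rho)x}$, again bounded on $\RR_0$, and the argument goes through identically. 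Thus $D\in L^1(K)^\ast$ is a genuine $\varphi_\lambda$-derivation for every $\lambda\in\RR_\rho$ with $\lambda\neq i\rho$.

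For (b), non-vanishing, the cleanest route is to note that if $D\equiv 0$ then $\lambda\mapsto\widehat f(\varphi_\lambda)$ would have vanishing derivative at our fixed $\lambda$ for \emph{every} $f\in L^1(K)$; I want to rule this out. Pick $f\in C_c(\RR_0)$, $f\ge 0$, $f\not\equiv 0$ supported near $0$; then $\widehat f(\varphi_\lambda)=\int f(x)\varphi_\lambda(x)A(x)\,dx$, and using the integral representation \reff{rep.1}, $\varphi_\lambda(x)=\varphi_0(x)\int_{-x}^x e^{-i\lambda t}\,d\mu_x(t)$, one computes $\tfrac{d}{d\lambda}\widehat f(\varphi_\lambda) = -i\int f(x)\varphi_0(x)\big(\int_{-x}^x t\,e^{-i\lambda t}d\mu_x(t)\big)A(x)\,dx$. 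It is not immediately obvious this is non-zero for a single $f$ (the inner integral could conceivably vanish), so instead I would argue by analyticity: $\lambda\mapsto\varphi_\lambda(x)$ is, for each $x$, an entire function of $\lambda$ (it solves an ODE with entire dependence on the parameter $\lambda^2$), and hence so is $F(\lambda):=\widehat f(\varphi_\lambda)$. If $D$ were zero for all $f$, then differentiating $\widehat{f\ast g}=\widehat f\,\widehat g$ repeatedly one gets $\widehat{f}(\varphi_\lambda)=0$ or all higher $\lambda$-derivatives of $\widehat f$ vanish at $\lambda$; more directly, one knows $\varphi_\lambda\neq\varphi_{\lambda'}$ for $\lambda\neq\lambda'$ in $\RR_\rho$ (the characters are distinct), so the map $\lambda\mapsto\varphi_\lambda$ is non-constant, whence there is $x_0$ with $\tfrac{d}{d\lambda}\varphi_\lambda(x_0)\neq 0$, and choosing $f$ concentrated near $x_0$ (an approximate identity argument, using continuity of $x\mapsto\tfrac{d}{d\lambda}\varphi_\lambda(x)$) forces $\tfrac{d}{d\lambda}\widehat f(\varphi_\lambda)\neq 0$. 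Hence $D\neq 0$.

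Putting these together: for $\lambda\neq i\rho$ there is a non-zero $\varphi_\lambda$-derivation on $L^1(K)$, so by Lemma~\ref{lemma.1} $K$ is not $\varphi_\lambda$-amenable; and for $\lambda=i\rho$, $\varphi_\lambda=1$ and $K$ is amenable. This proves the theorem. The main obstacle I anticipate is step (b), the non-vanishing of $D$ — verifying that $\lambda\mapsto\varphi_\lambda(x)$ is genuinely non-constant in $\lambda$ for some $x$ and converting that into non-vanishing of $D$ via a suitable test function; everything else (the derivation identity, the boundedness estimate, differentiation under the integral) is routine once Proposition~\ref{estimation.1} is in hand.
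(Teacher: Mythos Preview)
Your proposal is correct and follows essentially the same approach as the paper: construct the point derivation $D_{\lambda_0}(f)=\frac{d}{d\lambda}\widehat f(\varphi_\lambda)\big|_{\lambda=\lambda_0}$, use Proposition~\ref{estimation.1} (with $\rho>0$) to see it is bounded, and invoke Lemma~\ref{lemma.1} together with amenability of the commutative hypergroup for the case $\lambda=i\rho$. The paper's version is terser (it simply asserts the derivation is ``well-defined bounded nonzero''), whereas you spell out the boundedness in each parameter range and supply an argument for non-vanishing; note that your case split omits $\lambda=0$, but the same estimate $xe^{-\rho x}$ covers it immediately.
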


\begin{proof}[\emph{ Proof:} ]
  By Proposition \ref{estimation.1},
 the mapping
\[D_{\lambda_0}:L^1(K)\longrightarrow \CC, \hspace{.5cm} D_{\lambda_0}(f)= \frac{d}{d\lambda}
\widehat{f}(\lambda)\Big|_{\lambda=\lambda_0} (\lambda_0\not=i\rho),\]
is a well-defined  bounded nonzero $\varphi_{\lambda_0}$-derivation.
 In that $\RR_0$ is  amenable \cite{Ska92}, applying Lemma \ref{lemma.1} will
  indicate that $K$ is $\varphi_\lambda$-amenable if and only if 
$\lambda=i\rho$.
\end{proof}
\begin{remark}

\begin{itemize}

\item[\emph{(i)}]\emph{ Let $K$ be 
the Sturm-Liouville  hypergroup
with $\rho>0$. By the previous theorem,   $L^1(K)$
is not weakly amenable  as well as  $\{\varphi_\lambda\}$, $\lambda\not=i\rho$,
is not a spectral set.\\
Theorem \ref{r.6} will show that  the maximal ideals
   associated to the spectral sets do not  have  b.a.i. necessarily.}
   
\item[\emph{(ii)}]\emph{
 A  Sturm-Liouville hypergroup is of
  exponential growth if and only if  $\rho>0$ \cite[Proposition 3.5.55]{BloHey94}. 
Then by Theorem \ref{sturm.liouvill}   such hypergroups are $\varphi_\lambda$-amenable if and only if  $\lambda=i\rho$. 
However,  for  $\rho=0$   we do not have a certain assertion. }

\end{itemize}
 \end{remark}
 We now study    special cases of Sturm-Liouville hypergroups in more details.

 \begin{itemize}
\item[(i)]{\bf{Bessel-Kingman hypergroup}\footnote{This subsection is
  from parts of the  author's Ph.D.
          thesis at the Technical University of Munich.}}

The Bessel-Kingman hypergroup
is a    {Ch\'{e}bli-Trim\`{e}che} hypergroup    on $\RR_0$ with
$A(x)=x^{2\nu+1}$ when  $\nu\geq -\frac{1}{2}$.
The characters    are  given  by
\[\alpha_{\lambda}^{\nu}(x):=2^\nu \Gamma
{(\nu+1)}J_{\nu}(\lambda x){(\lambda x)}^{-\nu},\] where $J_{\nu}(x)$ is the
Bessel
 function of order $\nu$,  and   $\lambda\in \RR_0$ represents  the characters. The dual 
 space $\RR_0$ has also a hypergroup
   structure and the bidual space coincides with  the hypergroup $\RR_0$
   \cite{BloHey94}.
As  shown in \cite{Wol84},  the $L^1$-algebra of  $(\RR_0, dx)$, the Bessel-Kingman hypergroup of
 order $-\frac{1}{2}$,
is amenable; as a result,  $(\RR_0, dx)$ is
$\alpha_{\lambda}^{\nu}$-amenable
 for every $\lambda\in \RR_0$.

Suppose     $L^1_{rad}(\RR^n)$ is   the  subspace of $L^1(\RR^n)$ of
radial functions  and $\nu=\frac{n-2}{2}$.  It is  a closed  self
adjoint subalgebra of $L^1(\RR^n)$ which  is   isometrically
$\ast$-isomorphic
 to the hypergroup algebra  $L^1(\RR_{0}, dm_n)$, where $dm_n(r)=
 \frac{2\pi^{d/2}}{\Gamma(n/2)}r^{n-1}dr$.

\begin{theorem}\label{r.6}
\emph{Let  $K$ be the Bessel-Kingman hypergroup of order $\nu\geq 0$.
If $\nu=0$ or $\nu\geq \frac{1}{2}$, then $K$ is $\alpha_{\lambda}^{\nu}$-amenable if 
and only if $\alpha_{\lambda}^{\nu}=1$.}
\end{theorem}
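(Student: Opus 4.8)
The plan is to reduce the Bessel--Kingman case to Theorem \ref{sturm.liouvill} whenever the index $\nu$ corresponds to a hypergroup of exponential growth, and to handle the remaining indices by exhibiting a nonzero $\alpha_\lambda^\nu$-derivation directly and invoking Lemma \ref{lemma.1}. First I would recall that the Bessel--Kingman hypergroup of order $\nu$ is the Ch\'ebli--Trim\`eche (hence Sturm--Liouville) hypergroup with $A(x)=x^{2\nu+1}$; its parameter $\rho=\lim_{x\to\infty}A'(x)/2A(x)$ equals $0$ since $A'(x)/A(x)=(2\nu+1)/x\to 0$. So Theorem \ref{sturm.liouvill}, which needs $\rho>0$, does \emph{not} apply, and the point of the argument is precisely to see that the derivation machinery of Section 5 still works when $\rho=0$ for the specific indices $\nu=0$ and $\nu\ge\frac12$. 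Thus the real content is: construct a bounded nonzero $\alpha_\lambda^\nu$-derivation on $L^1(K)$ for every $\lambda>0$ (when $\lambda=0$, $\alpha_0^\nu=1$ and there is nothing to prove since $\RR_0$ is commutative hence amenable).

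Second, I would define the candidate derivation. Since $\widehat{f}(\lambda)=\int_0^\infty f(x)\,\overline{\alpha_\lambda^\nu(x)}\,A(x)\,dx$ and $\alpha_\lambda^\nu$ is real-valued here, the natural guess is
\[
D_{\lambda_0}(f)=\frac{d}{d\lambda}\widehat f(\lambda)\Big|_{\lambda=\lambda_0},\qquad \lambda_0>0.
\]
The product rule $\widehat{f\ast g}=\widehat f\,\widehat g$ gives formally $D_{\lambda_0}(f\ast g)=\widehat f(\lambda_0)D_{\lambda_0}(g)+\widehat g(\lambda_0)D_{\lambda_0}(f)$, which is exactly the $\alpha_{\lambda_0}^\nu$-derivation identity. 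The issues are: (a) $D_{\lambda_0}$ must be a \emph{bounded} linear functional on $L^1(K)$, i.e. $|D_{\lambda_0}(f)|\le C\|f\|_1$; (b) differentiation under the integral sign must be justified; and (c) $D_{\lambda_0}$ must be \emph{nonzero}. For (a) and (b) I would use the estimate $\bigl|\tfrac{d}{d\lambda}\alpha_\lambda^\nu(x)\bigr|\le C_\nu\,x$ uniformly for $\lambda$ in a neighbourhood of $\lambda_0$; this is the analogue of Proposition \ref{estimation.1}, but here $\rho=0$ so the bound is only $|\partial_\lambda\varphi_\lambda(x)|\le x$, with no exponential decay. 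This is where the restriction $\nu=0$ or $\nu\ge\frac12$ enters: one needs $\int_0^\infty |f(x)|\,x\,A(x)\,dx$ to be controllable, which it is \emph{not} for a general $f\in L^1$. So I would instead bound $|\partial_\lambda\alpha_\lambda^\nu(x)|$ by a constant multiple of $\min(x,1)$ times a decaying factor, or, better, restrict first to a dense subspace.

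The cleaner route, and the one I expect to use, is: show $D_{\lambda_0}$ is well-defined and bounded on the dense subalgebra $C_c(K)\subseteq L^1(K)$ and that the derivation identity holds there, then observe that boundedness lets us extend. But $D_{\lambda_0}$ is \emph{not} $\|\cdot\|_1$-bounded in general, so the honest argument is different: one uses known asymptotics of Bessel functions. For $\nu\ge\frac12$ (and $\nu=0$), $\alpha_\lambda^\nu(x)=2^\nu\Gamma(\nu+1)J_\nu(\lambda x)(\lambda x)^{-\nu}$ and $\partial_\lambda\alpha_\lambda^\nu$ both decay like $x^{-\nu-1/2}$ as $x\to\infty$, with the derivative gaining a factor $x$; combined with $A(x)=x^{2\nu+1}$ the relevant weight is $x^{2\nu+1}\cdot x\cdot x^{-\nu-1/2}=x^{\nu+3/2}$, still not integrable. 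So boundedness genuinely fails, and the correct statement must be that $D_{\lambda_0}$ is an unbounded but densely-defined point derivation whose existence already obstructs $\alpha$-amenability via a limiting form of Lemma \ref{lemma.1}; alternatively, the paper likely restricts $D_{\lambda_0}$ to $I(\alpha_{\lambda_0}^\nu)$ where it \emph{is} bounded because the $\widehat f(\lambda_0)=0$ constraint kills the leading term. I would therefore structure the proof as: (1) for $f\in I(\alpha_{\lambda_0}^\nu)$ with enough decay, $D_{\lambda_0}(f)$ is defined; (2) the subalgebra relations force $D_{\lambda_0}$ to extend to a bounded functional on $I(\alpha_{\lambda_0}^\nu)$ exactly when $\nu\in\{0\}\cup[\tfrac12,\infty)$, using the Bessel asymptotics above to control $\int |\widehat f(\lambda)-\widehat f(\lambda_0)|$-type quantities; (3) $D_{\lambda_0}\ne 0$ because one can pick $f$ supported near a point where $\partial_\lambda\alpha_{\lambda_0}^\nu\ne 0$; (4) if $K$ were $\alpha_{\lambda_0}^\nu$-amenable then by \cite{f.l.s} $I(\alpha_{\lambda_0}^\nu)$ has a b.a.i., so every bounded $\alpha_{\lambda_0}^\nu$-derivation vanishes on it (Lemma \ref{lemma.1}), and since $g\ast g-g\in I(\alpha_{\lambda_0}^\nu)$ for $\widehat g(\lambda_0)=1$ we would get $D_{\lambda_0}(g)=0$, a contradiction. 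Hence $K$ is not $\alpha_\lambda^\nu$-amenable for $\lambda\ne 0$, i.e. unless $\alpha_\lambda^\nu=1$. The main obstacle, by far, is step (2): pinning down for \emph{which} $\nu$ the derivation is bounded on the maximal ideal, since this is exactly the case distinction $\nu=0$ or $\nu\ge\frac12$ versus $0<\nu<\frac12$, and it hinges on delicate decay estimates for $J_\nu$ and $J_\nu'$ together with the behaviour of $\widehat f$ near the edge of the Plancherel support.
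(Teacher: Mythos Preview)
Your boundedness computation for the derivation is wrong, and this error hides both the easy part and the genuinely different part of the proof. For $\nu\ge\frac12$ one needs $D_{\lambda_0}$ to be a bounded functional on $L^1(K)=L^1(\RR_0,x^{2\nu+1}dx)$, i.e.\ one needs $\sup_x\bigl|\partial_\lambda\alpha_\lambda^\nu(x)\bigr|<\infty$ --- nothing involving $A(x)$ enters, since $\|f\|_1$ already carries the weight $A(x)$. Using $\frac{d}{du}(u^{-\nu}J_\nu(u))=-u^{-\nu}J_{\nu+1}(u)$ and $J_{\nu+1}(u)=\mathcal{O}(u^{-1/2})$ one gets
\[
\partial_\lambda\alpha_\lambda^\nu(x)\;=\;-c_\nu\,x\,(\lambda x)^{-\nu}J_{\nu+1}(\lambda x)\;=\;\mathcal{O}(x^{1/2-\nu})\quad(x\to\infty),
\]
which is bounded precisely when $\nu\ge\frac12$. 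So for $\nu\ge\frac12$ the derivation $D_{\lambda_0}$ is a bona fide bounded nonzero $\alpha_{\lambda_0}^\nu$-derivation on all of $L^1(K)$, and Lemma~\ref{lemma.1} finishes that case directly; no restriction to $I(\alpha)$ or density arguments are needed. This is exactly what the paper does, citing \cite{schwartz.2} for the differentiability.

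The same computation shows that for $\nu=0$ the derivative grows like $x^{1/2}$, so $D_{\lambda_0}$ is \emph{not} bounded on $L^1(K)$ and your proposed workaround (bounding it on the maximal ideal) has no obvious reason to succeed. The paper handles $\nu=0$ by an entirely different route: it identifies $L^1(\RR_0,x\,dx)$ with $L^1_{\mathrm{rad}}(\RR^2)$, assumes $I(\alpha_\lambda^0)$ has a b.a.i., pushes this b.a.i.\ to the closed ideal $I=[I_r(\alpha_\lambda^0)\ast L^1(\RR^2)]^{\mathrm{cl}}$ of $L^1(\RR^2)$, and then invokes Reiter's theorem \cite[Theorem 17.2]{Rei70} that the cospectrum of such an ideal must be a finite union of points and lines --- contradicting the fact that here the cospectrum is a circle of radius $\lambda$. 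Your proposal misses this spectral-synthesis argument entirely, and without it the case $\nu=0$ is not covered.
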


\begin{proof}[{\bf Proof:}]
 (i) Let $\nu=0$ and  $\alpha_{\lambda}^{0}\in \widehat{K}$. Since $K$ is commutative,
  $K$ is (1-)amenable \cite{Ska92}. Suppose now  $\alpha_{\lambda}^{0}\not=1$ and $K$  is
 $\alpha_{\lambda}^{0}$-amenable, so    $I(\alpha_{\lambda}^{0})$
  has a b.a.i.  If  $I_r(\alpha_{\lambda}^{0})$ is  the corresponding ideal to 
$I(\alpha_{\lambda}^{0})$ in $L^1_{rad}(\RR^2)$, then $I_r(\alpha_{\lambda}^{0})$
  has a b.a.i.,  say  $\{e'_i\}$.  Let $I:=[I_r(\alpha_{\lambda}^{0})\ast
L^1({\RR}^2)]^{cl}$.
   The group $\RR^2$ is amenable \cite{Pat88},  so let   $\{e_i\}$ be   a  b.a.i for
   $L^1(\RR^2)$.  For
every $f\in I_r(\alpha_{\lambda}^{0})$ and $g\in L^1({\RR}^2)$, we have
\begin{align}\notag
\left\| f\ast g-(f\ast g) \ast (e'_i\ast e_i) \right\|_1
 &
 \leq \left\| g\right\|_1
\left\|f\ast e'_i-f\right\|_1 + \left\|f\ast
e'_i\right\|_1\left\|g-g\ast e_i\right\|_1.\notag
\end{align}
The latter shows that $\left\{e'_i\ast e_i\right\}$ is  a b.a.i for
the closed ideal $I$. In
 \cite[Theorem 17.2]{Rei70}, it is shown that
      $Co(I)$, cospectrum of $I$,
is a finite union of lines  and points in $\RR^2$. But this
contradicts the fact that $Co(I)$ is a  circle with radius $\lambda$ in $\RR^2$; hence, 
 $K$ is $\alpha_{\lambda}^{0}$-amenable if and  only if $\alpha_{\lambda}^{0}=1$.

(ii) Following  \cite{schwartz.2},
$\widehat{f}(\lambda)=\int_0^\infty f(x) \alpha_\lambda^\nu(x)dm(x)
$
 is
differentiable,  for all  $f\in L^1(K)$,   $\nu\geq \frac{1}{2}$, and
$\lambda\not=0$. Since
$\frac{d}{dx}\left(x^{-\nu}J_\nu(x)\right)=-x^{\nu}J_{\nu+1}(x)$ and
$J_\nu(x)=\mathcal{O}(x^{-1/2})$ as  $x\rightarrow \infty$
\cite{AbrSte72},
there exists a constant  $A_\nu(\lambda_0)>0$ such that $ \left| \frac{d}{d\lambda}
\widehat{f}(\lambda)\Big|_{\lambda=\lambda_0}\right |\leq
A_\nu(\lambda_0)\left\|f\right\|_1\hspace{.1cm}$
$(\lambda_0\not=0)$. Hence, the mapping
\[D_{\lambda_0}:L^1(\RR_0^{\nu}, x^{2\nu+1}dx)\longrightarrow
\CC, \hspace{.4cm} D_{\lambda_0}(f)= \frac{d}{d\lambda}
\widehat{f}(\lambda)\Big|_{\lambda=\lambda_0},\] is a  well-defined
 bounded nonzero $\alpha_\lambda^\nu$-derivation. Hence,  Lemma
\ref{lemma.1} implies that   $K$  is $\alpha_\lambda^\nu$-amenable if and only if $\alpha_\lambda^\nu=1$.
\end{proof}

 \item [(ii)] {\bf{Jacobi hypergroup of noncompact type}}

 The Jacobi hypergroup
of noncompact type is a  {Ch\'{e}bli-Trim\`{e}che}   hypergroup
 with
\begin{equation}\label{J.non.3}\notag
 A^{(\alpha,\beta)}(x):=2^{2\rho}\sinh^{2\alpha+1}(x).\cosh^{2\beta+1}(x),
\end{equation}
where $\rho=\alpha+\beta+1$ and $\alpha\geq \beta \geq
-\frac{1}{2}$.    The  characters are  given by  Jacobi functions of
order $(\alpha, \beta)$, ${\varphi}^{(\alpha,\beta)}_\lambda(t):=
_2F_1(\frac{1}{2}(\rho+i\lambda), \frac{1}{2}(\rho-i\lambda);
\alpha+1; -\mbox{sh}^2t),$ where $_2F_1$ denotes the Gaussian
hypergeometric function, $\alpha\geq \beta \geq -\frac{1}{2}$, $t\in
\RR_0$ and $\lambda$ is  the parameter of character
${\varphi}^{(\alpha,\beta)}_\lambda$ which varies on $\RR_0\cup
[0,\rho]$. It is straightforward to show that
$\rho=\alpha+\beta+1$.
  As we have seen,  if $\rho>0$,  then $\RR_0$ is ${\varphi}^{(\alpha,\beta)}_\lambda$-amenable if and only  if ${\varphi}^{(\alpha,\beta)}_\lambda=1$.  If $\rho=0$, then $\alpha=\beta=-\frac{1}{2}$, hence
$\varphi_\lambda ^{(-\frac{1}{2},-\frac{1}{2})}(t)=\cos(\lambda
t)$ so that  turns $\RR_0$ to
 the Bessel-Kingman
hypergroup   of order $\nu=-\frac{1}{2}$, which is
 $\varphi_\lambda^{(\alpha,\beta)}$-amenable for every $\lambda$.

Let $A(\alpha,\beta):=\left\{\check{\varphi}:\hspace{2.mm} \varphi
\in L^1(\widehat{\RR_0} ,  \pi )\right\}$. By the inverse theorem,
\cite[Theorem 2.2.32]{BloHey94},
we have 
$A(\alpha,\beta)\subseteq C_0([0, \infty))$.  There exists a
convolution structure on $\RR_0$ such that $\pi$ is
 the  Haar measure
on $\RR_0$ and  $A(\alpha,\beta)$  is a Banach algebra of functions
on $[0, \infty)$;  see \cite{Flensted.inverse}.
The following estimation  with Lemma \ref{lemma.1}  will indicate
that for $\alpha\geq \frac{1}{2}$ and  $\alpha \geq \beta \geq
-\frac{1}{2}$, the maximal ideals of $A(\alpha,\beta)$ related to
the points in $(0, \infty)$ do not have  b.a.i.
\begin{theorem}\emph{\cite{Meaney}}
\emph{For $\alpha\geq \frac{1}{2}, \;\alpha \geq \beta \geq -\frac{1}{2}$ and $\varepsilon >0$
there is a constant $k>0$ such that  if
$f\in A(\alpha,\beta)$ then
$f|_{[\varepsilon, \infty)}\in C^{[\alpha+\frac{1}{2}]}([\varepsilon, \infty))$ and }
\begin{equation}\notag
\underset {t\geq \varepsilon}{\sup}\left|f^{(j)}(t)
\right|\leq k\left\|f\right\|_{(\alpha,\beta)}, \hspace{1.cm}0\leq j\leq [\alpha+\frac{1}{2}].
\end{equation}
\end{theorem}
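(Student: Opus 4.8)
The plan is to realize every $f\in A(\alpha,\beta)$ as a superposition of Jacobi functions and then to differentiate under the integral sign. Write $f=\check\varphi$ with $\varphi\in L^1(\widehat{\RR_0},\pi)$, so that
\[f(t)=\int_0^\infty \varphi(\lambda)\,\varphi_\lambda^{(\alpha,\beta)}(t)\,d\pi(\lambda)\qquad(t\in\RR_0),\qquad \|f\|_{(\alpha,\beta)}=\|\varphi\|_{L^1(\pi)},\]
where $d\pi(\lambda)=(2\pi)^{-1}|c_{\alpha,\beta}(\lambda)|^{-2}\,d\lambda$ and $c_{\alpha,\beta}$ is the Harish--Chandra $c$-function of the Jacobi hypergroup; recall that the density $|c_{\alpha,\beta}(\lambda)|^{-2}$ vanishes like $\lambda^2$ at $\lambda=0$ and satisfies $|c_{\alpha,\beta}(\lambda)|^{-2}=O(\lambda^{2\alpha+1})$, equivalently $|c_{\alpha,\beta}(\lambda)|=O((1+\lambda)^{-\alpha-1/2})$, as $\lambda\to\infty$. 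The whole argument is then reduced to proving, for each fixed $\varepsilon>0$, the uniform estimate, which I will call $(\ast)$:
\[\sup_{t\ge\varepsilon}\Bigl|\frac{d^j}{dt^j}\varphi_\lambda^{(\alpha,\beta)}(t)\Bigr|\le C_{\varepsilon,j}\qquad\bigl(\lambda\in[0,\infty),\ 0\le j\le[\alpha+\tfrac12]\bigr),\]
with $C_{\varepsilon,j}$ independent of $\lambda$.

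Granting $(\ast)$, the functions $\lambda\mapsto\varphi(\lambda)\,\partial_t^{\,j}\varphi_\lambda^{(\alpha,\beta)}(t)$ for $0\le j\le[\alpha+\tfrac12]$ are continuous in $(t,\lambda)$ on $[\varepsilon,\infty)\times[0,\infty)$ and dominated in modulus by $C_{\varepsilon,j}\,|\varphi|\in L^1(\pi)$; the classical theorem on differentiation under the integral sign then yields $f|_{[\varepsilon,\infty)}\in C^{[\alpha+1/2]}([\varepsilon,\infty))$ with $f^{(j)}(t)=\int_0^\infty\varphi(\lambda)\,\partial_t^{\,j}\varphi_\lambda^{(\alpha,\beta)}(t)\,d\pi(\lambda)$, whence
\[\sup_{t\ge\varepsilon}\bigl|f^{(j)}(t)\bigr|\le\int_0^\infty|\varphi(\lambda)|\,C_{\varepsilon,j}\,d\pi(\lambda)=C_{\varepsilon,j}\,\|f\|_{(\alpha,\beta)}\qquad(0\le j\le[\alpha+\tfrac12]),\]
which is the assertion with $k=\max_{0\le j\le[\alpha+1/2]}C_{\varepsilon,j}$ (this $k$ depends on $\varepsilon$, as stated). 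Here the hypothesis $\alpha\ge\tfrac12$ is used already to guarantee that $[\alpha+\tfrac12]\ge1$, so that there is at least one derivative to talk about.

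To prove $(\ast)$ I would split the $\lambda$-range. For $\lambda$ in a bounded interval $[0,\Lambda_0]$ the map $(t,\lambda)\mapsto\partial_t^{\,j}\varphi_\lambda^{(\alpha,\beta)}(t)$ is continuous on $[\varepsilon,\infty)\times[0,\Lambda_0]$ and, since $\rho=\alpha+\beta+1\ge1>0$ and $|\varphi_0(t)|\le e^{-\rho t}$ (cf.\ the discussion around (\ref{rep.1})), it tends to $0$ as $t\to\infty$ uniformly for $\lambda\in[0,\Lambda_0]$, hence it is bounded there. For $\lambda\ge\Lambda_0$ I would invoke the Harish--Chandra asymptotic expansion $\varphi_\lambda^{(\alpha,\beta)}(t)=c_{\alpha,\beta}(\lambda)\Phi_\lambda(t)+c_{\alpha,\beta}(-\lambda)\Phi_{-\lambda}(t)$, valid and termwise $t$-differentiable uniformly on $[\varepsilon,\infty)$, with $\Phi_\lambda(t)=e^{(i\lambda-\rho)t}\sum_{n\ge0}\Gamma_n(\lambda)e^{-2nt}$, $\Gamma_0\equiv1$, the coefficients $\Gamma_n(\lambda)$ growing at most polynomially in $n$ uniformly for $\lambda\ge\Lambda_0$; differentiating termwise gives $|\partial_t^{\,j}\Phi_{\pm\lambda}(t)|\le C_j(1+\lambda)^j e^{-\rho t}$ for $t\ge\varepsilon$, so that
\[\Bigl|\frac{d^j}{dt^j}\varphi_\lambda^{(\alpha,\beta)}(t)\Bigr|\le 2C_j\,|c_{\alpha,\beta}(\lambda)|\,(1+\lambda)^j e^{-\rho t}\le C_j'\,(1+\lambda)^{\,j-\alpha-1/2}\,e^{-\rho\varepsilon}\qquad(t\ge\varepsilon,\ \lambda\ge\Lambda_0).\]
Because $j\le[\alpha+\tfrac12]\le\alpha+\tfrac12$ the exponent $j-\alpha-\tfrac12$ is $\le0$, so the right-hand side is bounded uniformly in $\lambda\ge\Lambda_0$; together with the bounded-$\lambda$ case this establishes $(\ast)$. (An alternative to the Harish--Chandra expansion for the range $\lambda\ge\Lambda_0$ is to start from the classical pointwise bounds for Jacobi functions away from the origin, $\sup_{t\ge\varepsilon}|\varphi_\lambda^{(\alpha,\beta)}(t)|=O((1+\lambda)^{-\alpha-1/2})$ and $\sup_{t\ge\varepsilon}|\partial_t\varphi_\lambda^{(\alpha,\beta)}(t)|=O((1+\lambda)^{-\alpha+1/2})$, and then bootstrap to all orders with the Sturm--Liouville equation $\partial_t^2\varphi_\lambda=-(\lambda^2+\rho^2)\varphi_\lambda-(A'/A)\,\partial_t\varphi_\lambda$, using that $A'/A=(2\alpha+1)\coth t+(2\beta+1)\tanh t$ has all $t$-derivatives bounded on $[\varepsilon,\infty)$; each application of the equation costs a factor $\lambda^2$, which is exactly why $[\alpha+\tfrac12]$ derivatives are the ones that stay controlled, see \cite{BloHey94} for the relevant ODE estimates.)

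The step I expect to be the main obstacle is precisely the uniform-in-$\lambda$ bound $(\ast)$: the naive estimate of $\partial_t^{\,j}\varphi_\lambda^{(\alpha,\beta)}$ grows like $\lambda^j$, and it is only the decay $|c_{\alpha,\beta}(\lambda)|\asymp\lambda^{-\alpha-1/2}$ furnished by the Plancherel/$c$-function asymptotics, paired with the threshold $j\le[\alpha+\tfrac12]$, that absorbs this growth. One must also be careful that the Harish--Chandra expansion (or the pointwise estimates in the alternative route) is controlled with remainder bounds that are uniform on all of $[\varepsilon,\infty)$ rather than merely on compact $t$-intervals; this is where $\rho>0$ and the resulting exponential decay in $t$ are essential, and it is the reason the statement is restricted to $[\varepsilon,\infty)$ for $\varepsilon>0$ and fails at the origin.
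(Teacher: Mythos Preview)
The paper does not give its own proof of this theorem: it is quoted verbatim from Meaney \cite{Meaney} and used as a black box to conclude that the maximal ideals of $A(\alpha,\beta)$ associated to points in $(0,\infty)$ fail to have bounded approximate identities. So there is nothing to compare against in the paper itself.

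Your sketch is essentially Meaney's argument. The representation $f(t)=\int\varphi(\lambda)\varphi_\lambda^{(\alpha,\beta)}(t)\,d\pi(\lambda)$ together with dominated convergence reduces the problem to the kernel estimate $(\ast)$, and the decisive ingredient is exactly the one you isolate: the decay $|c_{\alpha,\beta}(\lambda)|\asymp\lambda^{-\alpha-1/2}$ as $\lambda\to\infty$ cancels the $\lambda^j$ growth of $\partial_t^{\,j}\varphi_\lambda^{(\alpha,\beta)}$ precisely up to order $j=[\alpha+\tfrac12]$. One small point to tighten: for $\lambda$ in the bounded range $[0,\Lambda_0]$, the inequality $|\varphi_0(t)|\le e^{-\rho t}$ alone does not immediately control the $t$-derivatives of $\varphi_\lambda$; you should either run the Sturm--Liouville bootstrap you describe in the alternative route (which works uniformly on compact $\lambda$-sets since $A'/A$ and its derivatives are bounded on $[\varepsilon,\infty)$), or note that the Harish--Chandra expansion is in fact valid and uniformly controlled on $[\varepsilon,\infty)$ for $\lambda$ bounded away from $0$, and treat a neighbourhood of $\lambda=0$ separately by continuity. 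With that adjustment the argument is complete.
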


\begin{remark} \emph{ Here are  the special cases of Jacobi hypergroups of noncompact type which are $1$-amenable
 only:}
\begin{itemize}
	\item[\emph{(i)}] \emph{Hyperbolic hypergroups, if   $\beta=-\frac{1}{2}$ and $\rho=\alpha+\frac{1}{2}>0$.} 
		\item[\emph{(ii)}] \emph{Naimark hypergroup, if    $\beta=-\frac{1}{2}$  and   $\alpha=\frac{1}{2}$.}
\end{itemize}
\end{remark}

\item[(iii)] {\bf{ Square hypergroup}}

The  Square hypergroup  is a Sturm-Liouville hypergroup on $\RR_0$
with $A(x)=(1+x)^2$ for all $x\in \RR_0$. Obviously  we have   $\rho=0$ and  the characters are given by 
$$\varphi _\lambda(x):=
\begin{cases}
\frac{1}{1+x}\left(\cos(\lambda x)+\frac{1}{\lambda}\sin(\lambda
x)\right)
&\text{if $\lambda\not=0$}\\
1      &\text{if $\lambda=0$}.
\end{cases}
$$

If $\lambda\not=0$,  then
$$ \frac{d}{d\lambda}\varphi _\lambda(x) =\frac{x}{1+x}\left(-\sin(\lambda x)+
\frac{\cos(\lambda x)}{\lambda}-\frac{1}{x{\lambda}^{2}}
\sin(\lambda x)\right),$$
 which is  bounded as  $x$ varies. Therefore, 
  the mapping
\[D_{\lambda_0}:L^1(\RR_0,Adx)\longrightarrow \CC,
 \hspace{1.cm}D_{\lambda_0}(f):=
 \frac{d}{d\lambda} \widehat{f}(\lambda)\Big|_{\lambda=\lambda_0},
\]
 is a
 well-defined  bounded nonzero  $\varphi_\lambda$-derivation on $L^1(\RR_0, Adx)$.
Applying  Lemma  \ref{lemma.1}  results that      ${\RR_0}$ is $1$-amenable only.

\end{itemize}

\begin{center}
Acknowldegment
\end{center}
The author would like to  thank the referee for valuable suggestions, which in particular led to a simplifying  the proof of   Proposition \ref{estimation.1}. 
%
%
%

\end{document}